\date{}
\newcommand{\R}{\mathbb{R}}
\newcommand{\re}{\mathbb{R}}
\newcommand{\n}{\mathbb{N}}
\newcommand{\ep}{\varepsilon}
\newcommand{\sPMF}{\operatorname{PMF}}
\newcommand{\PM}{\operatorname{\mathbb{PM}}}
\newcommand{\PMF}{\operatorname{\mathbb{PMF}}}
\newcommand{\RPM}{\operatorname{\mathbb{RPM}}}
\newcommand{\RPMF}{\operatorname{\mathbb{RPMF}}}
\newcommand{\J}{\operatorname{\mathbb{J}}}
\newcommand{\FF}{\operatorname{\mathbb{F}}}
\newcommand{\GG}{\operatorname{\mathbb{G}}}
\newcommand{\HH}{\operatorname{\mathbb{H}}}
\newcommand{\RPMV}{\operatorname{\mathbb{VRPM}}}
\newcommand{\RPMH}{\operatorname{\mathbb{HRPM}}}
\newcommand{\sgn}{\operatorname{sgn}}
\newcommand{\obl}{\mathrm{Obl}}
\newcommand{\vrt}{\mathrm{Vert}}
\newcommand{\orz}{\mathrm{Hor}}
\newcommand{\hatv}{\widehat{v}}
\newcommand{\hatw}{\widehat{w}}
\newcommand{\glim}{\Gamma\mbox{--}\lim}
\newcommand{\auto}{\mathrel{\substack{\vspace{-0.1ex}\\\displaystyle\leadsto\\[-0.9em] \displaystyle\leadsto}}}
\newcommand{\loc}{_{\mathrm{loc}}}
\newcommand{\omep}{\omega(\ep)}
\newcommand{\omepn}{\omega(\ep_{n})}
\newtheorem{thm}{Theorem}[section]
\newtheorem{thmbibl}{Theorem}[section]
\newtheorem{rmk}[thm]{Remark}
\newtheorem{prop}[thm]{Proposition}
\newtheorem{defn}[thm]{Definition}
\newtheorem{lemma}[thm]{Lemma}
\title{Multi-scale analysis of minimizers for a second order regularization of the Perona-Malik functional}
\author{
Massimo Gobbino\vspace{1ex}\\ 
{\normalsize Università di Pisa} \\
{\normalsize Dipartimento di Matematica}\\ 
{\normalsize PISA (Italy)}\\  
{\normalsize e-mail: \texttt{massimo.gobbino@unipi.it}}
\and
Nicola Picenni\vspace{1ex}\\ 
{\normalsize Università di Pisa} \\
{\normalsize Dipartimento di Matematica}\\ 
{\normalsize PISA (Italy)}\\
{\normalsize e-mail: \texttt{nicola.picenni@dm.unipi.it}}
}
\begin{document}
\maketitle

\begin{abstract}

We investigate the asymptotic behavior of minimizers for the singularly perturbed Perona-Malik functional in one dimension. In a previous study, we have shown that blow-ups of these minimizers at a suitable scale converge to staircase-like piecewise constant functions. 

Building upon these findings, we delve into finer scales, revealing that both the vertical and horizontal regions of the staircase steps display cubic polynomial behavior after appropriate rescaling.

Our analysis hinges on identifying the dominant terms of the functional within each regime, elucidating the mechanisms driving the observed asymptotic behavior.

\vspace{6ex}

\noindent{\bf Mathematics Subject Classification 2020 (MSC2020):} 
49J45, 35B25.

\vspace{6ex}

%35B44 Blow-up in context of PDEs 
%49J45 Methods involving semicontinuity and convergence; relaxation
%35B27 Homogenization in context of PDEs; PDEs in media with periodic structure
%35A15 Variational methods applied to PDEs
%35B25 Singular perturbations in context of PDEs
%35J20 Variational methods for second-order elliptic equations
%35J35 Variational methods for higher-order elliptic equations
%47J30 Variational methods involving nonlinear operators
%49Q20 Variational problems in a geometric measure-theoretic setting		
%34E10 Perturbations, asymptotics of solutions to ordinary differential equations

\noindent{\bf Key words:}
Perona-Malik functional, singular perturbation, higher order regularization, Gamma-convergence, blow-up, multi-scale problem.

\end{abstract}

%%%%%%%%%%%%%%%%%%%%%
%                   %
%   Inizio lavoro   %
%                   %
%%%%%%%%%%%%%%%%%%%%%

\section{Introduction}

In this paper, which is intended as a continuation of~\cite{FastPM-CdV}, we consider minimizers of the regularized Perona-Malik functional   
\begin{equation}
\sPMF_{\ep}(u):=
\int_{0}^{1}\left\{\ep^{10}|\log\ep|^{2}u''(x)^{2}+
\log\left(1+u'(x)^{2}\right)+
\beta(u(x)-f(x))^{2}\right\}dx,
\label{defn:SPM-intro}
\end{equation}
where $\ep\in(0,1)$ and $\beta>0$ are real numbers, and $f\in L^{2}((0,1))$ is a given function that we call \emph{forcing term}. The three quantities in the integral can be interpreted as follows.
\begin{itemize}

\item The last one is a \emph{fidelity term}, tuned by the parameter $\beta$, that penalizes the distance between $u$ and the forcing term $f$.

\item  The second term is the true Perona-Malik functional. Its main feature is that the Lagrangian $\phi(p):=\log(1+p^{2})$ is non-convex and has a convexification that is identically equal to zero. The convex/concave nature of the Lagrangian translates into the forward-backward character of the formal gradient-flow of this second part of the functional, which is (up to a factor~2) the celebrated evolution equation
\begin{equation}
u_{t}=\left(\frac{u_{x}}{1+u_{x}^{2}}\right)_{x}=\frac{1-u_{x}^{2}}{(1+u_{x}^{2})^{2}}\,u_{xx},
\label{defn:PM-eqn}
\end{equation}
introduced by P.~Perona and J.~Malik~\cite{PeronaMalik}. The contrast between the analytical instability that is expected for (\ref{defn:PM-eqn}), and the apparent stability of numerical simulations, is usually referred to as the ``Perona-Malik paradox'' (see~\cite{Kichenassamy,2001-CPAM-Esedoglu,2006-SIAM-BNP,GG:grad-est}). For a more comprehensive discussion of the preceding literature, we refer readers to the recent papers~\cite{2018-Nonlinearity-KimYan,2020-JDE-BerSmaTes,FastPM-CdV,2023-PM-TV,fastpm-discreto}, as well as the references cited therein.

\item  The first term is a second order singular perturbation that regularizes the functional, and corresponds in the dynamical setting to the fourth order regularization of (\ref{defn:PM-eqn}) (see \cite{1996-Duke-DeGiorgi,2006-DCDS-BelFusGug,2008-TAMS-BF,2019-SIAM-BerGiaTes}). The bizarre form of the $\ep$-dependent coefficient prevents the appearance of implicit rescaling factors and implicit decay rates in the sequel of the paper.

\end{itemize}

\paragraph{\textmd{\textit{First order analysis}}}

On the one hand, the convex and coercive second order term guarantees that the model is well-posed, in the sense that the minimum problem for (\ref{defn:SPM-intro}) admits at least one minimizer in $C^2([0,1])$ for every admissible choice of $\ep$, $\beta$, $f$. On the other hand, the unstable nature of the Perona-Malik functional becomes increasingly manifest as $\ep\to 0^+$. As a consequence, minimum values tend to~0, minimizers tend to $f$ in $L^2((0,1))$ and, more important, minimizers develop a microstructure known as \emph{staircasing effect}, as shown in the upper part of Figure~\ref{figure:multi-scale}.

In the companion paper~\cite{FastPM-CdV} we carried out a quantitative analysis of this phenomenon. We assumed that the forcing term is of class $C^1$, we introduced the quantity
\begin{equation}
    \omep:=\ep|\log\ep|^{1/2},
    \label{defn:omep}
\end{equation}
and we proved two results.
\begin{itemize}
    \item Minimum values satisfy the asymptotic estimate
    \begin{equation}
        \lim_{\ep\to 0^+}\frac{\min\{\sPMF_{\ep}(u):u\in C^2((0,1))\}}{\omep^2}=10\left(\frac{2\beta}{27}\right)^{1/5}\int_{0}^{1}|f'(x)|^{4/5}\,dx.
        \label{th:min-asympt}
    \end{equation}

    \item  Minimizers develop a staircase structure at scale $\omep$. In order to formalize this idea, for every family $\{u_\ep\}$ of minimizers of (\ref{defn:SPM-intro}), and for every family of points $\{x_\ep\}\subseteq (0,1)$ such that $x_\ep\to x_0 \in (0,1)$, we introduced the family of blow-ups
\begin{equation}
w_\ep(y):=\frac{u_{\ep}(x_{\ep}+\omep y)-f(x_{\ep})}{\omep}
\label{defn:wep-old}
\end{equation}
and the family of blow-ups
\begin{equation}
v_\ep(y):=\frac{u_{\ep}(x_{\ep}+\omep y)-u_{\ep}(x_{\ep})}{\omep}.
\label{defn:vep}
\end{equation}

In essence, both families entail a zooming-in process on the graph of $u_\ep$ within a window of size $\omep$. With (\ref{defn:wep-old}), the window's center lies on the graph of the forcing term, while with (\ref{defn:vep}), it resides on the graph of $u_\ep$ itself. We proved that both families converge (up to subsequences) to a piecewise constant function resembling a staircase, where the steps' height and length depend upon $f'(x_{0})$. The notion of convergence employed here is the strongest one that allows smooth functions to converge to discontinuous functions, specifically strict convergence of bounded variation functions (for further details, we refer to section~\ref{sec:previous}).
\end{itemize}

\paragraph{\textmd{\textit{Main contribution of this paper}}}

The aim of this paper is to describe the structure of minimizers at finer scales. To this end, we examine separately the vertical and the horizontal parts of each step.
\begin{itemize}

\item In the vertical region, we further rescale the blow-ups horizontally by a factor $\ep^2$, and we show that they converge in a very strong sense to a suitable cubic polynomial that interpolates the horizontal lines corresponding to the two consecutive steps (see Theorem~\ref{thm:vertical}). 

\item In the horizontal region, we further rescale the blow-ups vertically by a factor $\omep^2$, and we show that again they converge, albeit up to subsequences, to cubic polynomials (see Theorem~\ref{thm:horizontal}). This indicates that blow-ups exhibit flatness up to an order of $\omep^2$, and hence minimizers exhibit flatness up to an order of $\omep^3$, within these regions.
\end{itemize}

\begin{figure}[ht]
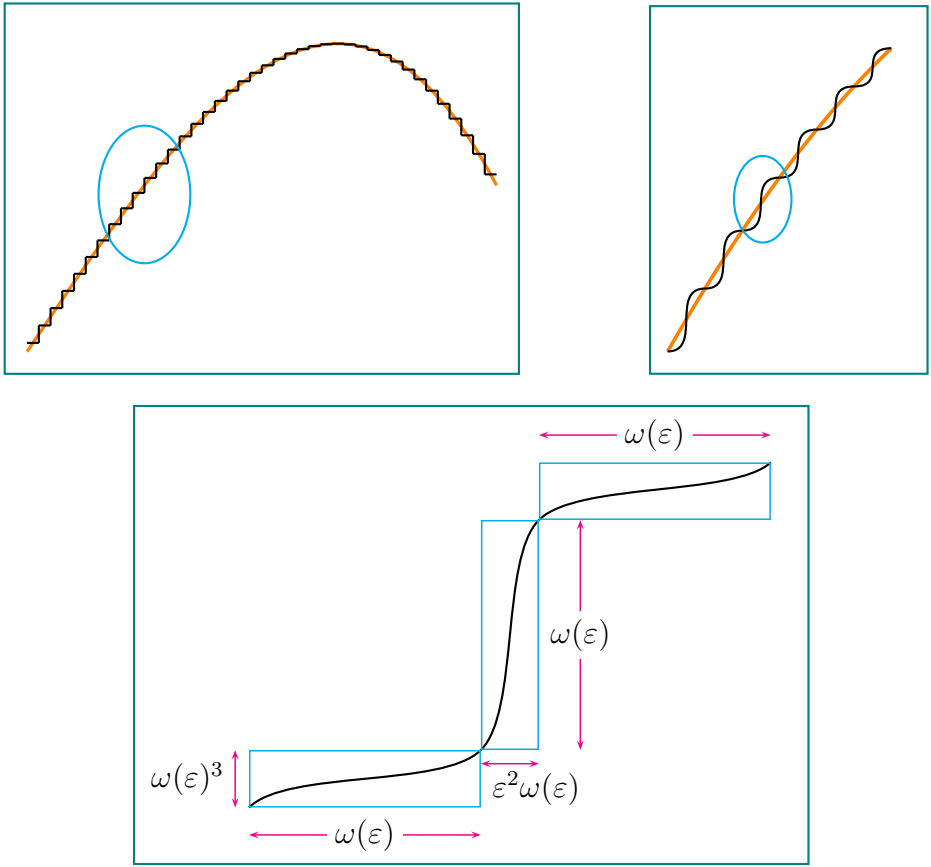

\centering

\hfill
\psset{unit=8.5ex}
\pspicture(-0.2,-0.2)(4.2,3)
\psframe[linecolor=teal](-0.2,-0.2)(4.2,3)

\psplot[linecolor=orange,linewidth=1.5\pslinewidth]{0}{4}{x 1.5 mul x 3 exp 14 div sub}

\multido{\n=0.05+0.10}{40}{
\psline(!\n \space 0.05 sub \n \space 1.5 mul \n \space 3 exp 14 div sub)
(!\n \space 0.05 add \n \space 1.5 mul \n \space 3 exp 14 div sub)
}

\multido{\n=0.05+0.10}{39}{
\psline
(!\n \space 0.05 add \n \space 1.5 mul \n \space 3 exp 14 div sub)
(!\n \space 0.05 add \n \space 0.1 add 1.5 mul \n \space 0.1 add 3 exp 14 div sub)
}

\psellipse[linecolor=cyan](1,1.35)(0.4,0.6)

\endpspicture
\hfill
\psset{unit=13.5ex}
\pspicture(-0.2,-0.3)(1.3,1.7)
\psframe[linecolor=teal](-0.2,-0.3)(1.3,1.7)

\psplot[linecolor=orange,linewidth=1.7\pslinewidth]{-0.1}{1.1}{x 1.7 mul x 2 exp 3 div sub}

\multido{\n=-0.1+0.2}{6}{\psbezier
(!\n \space \n \space 1.7 mul \n \space 2 exp 3 div sub)
(!\n \space 0.19 add \n \space 1.7 mul \n \space 2 exp 3 div sub)
(!\n \space 0.01 add \n \space 0.2 add 1.7 mul \n \space 0.2 add 2 exp 3 div sub)
(!\n \space 0.2 add \n \space 0.2 add 1.7 mul \n \space 0.2 add 2 exp 3 div sub)
}

\psellipse[linecolor=cyan](0.41,0.65)(0.16,0.24)

\endpspicture
\hfill
\mbox{}
\vspace{2ex}

\par
\hfill
\psset{unit=2.8ex}
\pspicture(-9,-3)(8.5,9)
\psframe[linecolor=teal](-9,-3)(8.5,9)

\psbezier(-6,-1.5)(-5,-0.5)(-1,-1)(0,0)
\psbezier(0,0)(1,1)(0.5,5)(1.5,6)
\psbezier(1.5,6)(2.5,7)(6.5,6.6)(7.5,7.5)

\psframe[linewidth=0.7\pslinewidth,linecolor=cyan](0,0)(1.5,6)
\psframe[linewidth=0.7\pslinewidth,linecolor=cyan](1.5,6)(7.5,7.5)
\psframe[linewidth=0.7\pslinewidth,linecolor=cyan](-6,-1.5)(0,0)

\pcline[offset=-1ex,linewidth=0.7\pslinewidth,linecolor=magenta]{<->}(0,0)(1.5,0)
\lput{0}{\rput[tl]{0}(-0.45,-0.2){$\ep^2\omep$}}

\pcline[offset=-2ex,linewidth=0.7\pslinewidth,linecolor=magenta]{<->}(-6,-1.5)(0,-1.5)
\lput*{:U}{$\omep$}

\pcline[offset=2ex,linewidth=0.7\pslinewidth,linecolor=magenta]{<->}(1.5,7.5)(7.5,7.5)
\lput*{:U}{$\omep$}

\pcline[offset=-3ex,linewidth=0.7\pslinewidth,linecolor=magenta]{<->}(1.5,0)(1.5,6)
\lput*{0}{$\omep$}

\pcline[offset=1ex,linewidth=0.7\pslinewidth,linecolor=magenta]{<->}(-6,-1.5)(-6,0)
\lput{0}{\rput[r]{0}(-0.3,0){$\omep^3$}}

\endpspicture
\hfill
\mbox{}

\caption{Representation of minimizers at three different levels of resolution.
\\ 
Top-left: staircasing effect around the forcing term (low resolution).
\\ 
Top-right: zoomed-in view of the staircase within a specific region of order $\omep$.
\\ 
Bottom: multi-scale analysis with cubic behavior of the staircase steps.}
\label{figure:multi-scale}
\end{figure}

These two statements offer a comprehensive understanding of a multi-scale problem.  In the original scale, minimizers are substantially indistinguishable from the forcing term.  Upon closer examination, they exhibit a staircase-like appearance, with steps of lengths and heights of the order of $\omep$. A deeper zoom reveals that the horizontal ``segments'' resemble cubic polynomials enclosed in boxes approximately $\omep\times\omep^3$ in size, while the vertical ``segments'' resemble cubic polynomials enclosed in boxes approximately $\ep^2\omep\times\omep$ in size.

These two statements are summed up in the bottom of Figure~\ref{figure:multi-scale}. 

\paragraph{\textmd{\textit{Brief heuristic explanation}}}

Despite the formal analogy, the two cubic behaviors have a completely different origin. Specifically, the cubic behavior in the vertical region stems from the singular perturbation term, while the cubic behavior in the horizontal region arises from a linearization of the remaining two terms.

As elucidated in~\cite{FastPM-CdV}, the fundamental cause of the staircasing phenomenon lies in the convex/concave nature of the logarithmic term, which forces minimizers to follow the forcing term by alternating ``almost horizontal regions'' where $u_\ep'(x)\sim 0$, and ``almost vertical regions'' where $u_\ep'(x)\sim\pm\infty$. Within each vertical region separating two consecutive horizontal regions, the objective is to minimize the contribution of the term involving second-order derivatives. Upon suitable rescaling, this results in a minimization problem of the form
\begin{equation}
    \min\left\{\int_{-1}^{1}u''(x)^2\,dx:
    u(-1)=-1,\ u(1)=1, \ u'(-1)=u'(1)=0\right\},
\nonumber
\end{equation}
whose solution is a cubic polynomial. 

On the contrary, in the horizontal regions we can neglect the second order derivatives, use the approximation $\log(1+u'(x)^2)\sim u'(x)^2$, and consider only the linear part $-2u(x)f(x)$ of the fidelity term (of course the term of order zero with $f(x)^2$ plays no role in the minimization process). At a suitable scale we can also replace $f(x)$ with the tangent line $f'(x_0)x$.  This yields a minimization problem of the form
\begin{equation}
    \min\left\{\int_{-1}^{1}\left[u'(x)^2-2\beta f'(x_0)x\cdot u(x)\right]dx\right\},
\nonumber
\end{equation}
whose solutions are again cubic polynomials, but for completely different reasons.

We conclude by noting that the first-order analysis, namely the staircase-like structure of minimizers, appears to be relatively independent of the approximating model (see the discussion in~\cite[section~7]{FastPM-CdV}, as well as the analogous results of~\cite{fastpm-discreto} for the discrete approximation). 
On the contrary, the higher-order analysis is likely to be significantly influenced by the specific approximating model, at least in the vertical regime. For instance, when considering a singular perturbation with derivatives of order $k$, as discussed in~\cite{Solci}, we anticipate that the vertical regions will resemble polynomials of degree $2k - 1$, whereas in the discrete approximation all vertical segments are realized by single big jumps. The dependency of the cubic behavior in the horizontal regime on the approximating model is currently less evident. Investigating the discrete approximation could shed light on this matter.

\paragraph{\textmd{\textit{Structure of the paper}}}

This paper is organized as follows. Section~\ref{sec:previous} is a summary of the notations and results of~\cite{FastPM-CdV} that are relevant for this paper. Section~\ref{sec:main} contains the statements of our main results. Section~\ref{sec:lemmata} is a collection of technical, but rather elementary, preliminaries. Section~\ref{sec:vertical} contains the proof of our main result concerning vertical parts of the staircase steps. Section~\ref{sec:horiz} contains the proof of our main result concerning horizontal parts of the staircase steps.  

%\clearpage

\setcounter{equation}{0}
\section{Previous results -- First order analysis}\label{sec:previous}

In this section we recall the notation and the main results of~\cite{FastPM-CdV} that we need throughout this paper. 

\paragraph{\textmd{\textit{Functionals}}}

In order to emphasize the dependence on all the parameters, we write the functional (\ref{defn:SPM-intro}) in the more general form
\begin{equation}
\PMF_{\ep}(\beta,f,\Omega,u):=
\int_{\Omega}\left\{\ep^{6}\omep^{4}u''(x)^{2}+
\log\left(1+u'(x)^{2}\right)+
\beta(u(x)-f(x))^{2}\right\}dx,
\label{defn:SPM}
\end{equation}
where $\ep\in(0,1)$ and $\beta>0$ are real numbers, $\Omega\subseteq\re$ is an open set, and $f\in L^{2}(\Omega)$ is the forcing term. We consider also the functional
\begin{equation}\label{defn:RPM}
\RPM_\ep(\Omega,u):=
\int_{\Omega}\left\{\ep^{6}u''(x)^{2}+
\frac{1}{\omep^2} \log\left(1+u'(x)^{2}\right)\right\}dx,
\end{equation}
which is a rescaled version of the principal part of (\ref{defn:SPM}), and its variant with the fidelity term
\begin{equation}
\RPMF_\ep(\beta,f,\Omega,u):=\RPM_\ep(\Omega,u)+ \beta \int_\Omega (u(x)-f(x))^{2}\,dx.
\label{defn:RPMF}
\end{equation}

Finally, we consider the set of ``pure jump functions'', namely functions $u\in BV(\Omega)$ whose distributional derivative is purely atomic. For any such function we set
$$\J_{1/2}(\Omega,u):=\sum_{x \in S_u \cap \Omega} |u^+(x)-u^-(x)|^{1/2},$$
where $S_u$ denotes the set of jump points of $u$, while $u^+(x)$ and $u^-(x)$ denote the limits of $u(y)$ as $y\to x^+$ and $y\to x^-$, respectively, so that $|u^+(x)-u^-(x)|$ is the height of the jump located in $x$.

We extend all these functionals to $L^{1}(\Omega)$ by setting them equal to $+\infty$ outside their ``natural domain''. With this understanding it is well-known (see \cite{2008-TAMS-BF,FastPM-CdV} for precise statements) that 
\begin{equation}
    \glim_{\ep\to 0} \RPM_\ep (\Omega,u)=
    \frac{16}{\sqrt{3}}\cdot \J_{1/2}(\Omega,u).
    \label{th:Gamma-conv}
\end{equation}

\paragraph{\textmd{\textit{Strict convergence}}}

Let $(a,b)\subseteq\re$ be an interval. A sequence $\{u_n\}\subseteq BV((a,b))$ is said to converge strictly to some $u_\infty\in BV((a,b))$ if $u_n\to u_\infty$ in $L^1((a,b))$, and the total variation of $u_n$ in $(a,b)$ converges to the total variation of $u_\infty$ in $(a,b)$. In this case, following~\cite{FastPM-CdV} we write $u_n\auto u_\infty$ in $BV((a,b))$.

We refer to~\cite{AFP} for more details concerning strict convergence. Here we just recall that strict convergence implies convergence in $L^p((a,b))$ for every $p\in[1,+\infty)$ (but not necessarily for $p=+\infty$), and also uniform convergence in every closed interval $[c,d]\subseteq(a,b)$ that does not contain jump points of the limit function $u_\infty$.

Finally, we say that $u_n\auto u_\infty$ in $BV\loc(\re)$ if $u_n\auto u_\infty$ in $BV((a,b))$ for every interval $(a,b)\subseteq\re$ whose endpoints are not jump points of the limit function $u_\infty$. In this case it is intended that the limit $u_\infty$ is defined on the whole real line, and for every interval $(a,b)$ the functions $u_n$ are defined at least in $(a,b)$ when $n$ is large enough (how large might depend on the interval).

\paragraph{\textmd{\textit{Staircases}}}

Let us recall the notation introduced in~\cite{FastPM-CdV} in order to describe staircase-like functions (\cite[Definitions~2.3 and~2.4]{FastPM-CdV}).

\begin{defn}[Staircases]%\label{defn:staircase}
\begin{em}

Let $S:\re\to\re$ be the function defined by
\begin{equation*}
S(x):=2\left\lfloor\frac{x+1}{2}\right\rfloor
\qquad
\forall x\in\re,
\end{equation*}
where, for every real number $\alpha$, the symbol $\lfloor\alpha\rfloor$ denotes the greatest integer less than or equal to $\alpha$. 
\begin{itemize}

\item For every pair $(H,V)$ of real numbers, with $H>0$, we call \emph{canonical $(H,V)$-staircase} the function $S_{H,V}:\re\to\re$ defined by
\begin{equation}
S_{H,V}(x):=V\cdot S(x/H)
\qquad
\forall x\in\re.
\nonumber
\end{equation}

\item We say that $v$ is an \emph{oblique translation} of $S_{H,V}$, and we write $v\in\obl(H,V)$, if there exists a real number $\tau_{0}\in[-1,1]$ such that
\begin{equation}
v(x)=S_{H,V}(x-H\tau_{0})+V\tau_{0}
\qquad
\forall x\in\re.
\nonumber
\end{equation}

\item We say that $v$ is a \emph{graph translation of horizontal type} of $S_{H,V}$, and we write $v\in\orz(H,V)$, if there exists a real number $\tau_{0}\in[-1,1]$ such that
\begin{equation}
v(x)=S_{H,V}(x-H\tau_{0})
\qquad
\forall x\in\re.
\nonumber
\end{equation}

\item We say that $v$ is a \emph{graph translation of vertical type} of $S_{H,V}$, and we write $v\in\vrt(H,V)$, if there exists a real number $\tau_{0}\in[-1,1]$ such that
\begin{equation}
v(x)=S_{H,V}(x-H)+V(1-\tau_{0})
\qquad
\forall x\in\re.
\nonumber
\end{equation}

\end{itemize}

\end{em}
\end{defn}

Roughly speaking, the graph of $S_{H,V}$ is a staircase with steps of horizontal length $2H$ and vertical height $2V$. The origin is the midpoint of the horizontal part of one of the steps. The staircase degenerates to the null function when $V=0$, independently of the value of~$H$. Graph translations correspond to moving the origin in different points of the graph, oblique translations to moving the origin along the line $Hy=Vx$. We refer to~\cite[Remark~2.5 and Figure~2]{FastPM-CdV} for a more detailed description.

\paragraph{\textmd{\textit{First order analysis of minimizers}}}

We are now ready to recall some of the results proved in~\cite{FastPM-CdV} for minimizers of the functional $\PMF_\ep(\beta,f,(a,b),u)$.

First of all, the existence and regularity of minimizers can be established through a standard application of the direct method in the calculus of variation (see \cite[Proposition~2.1]{FastPM-CdV}). To this end, it is enough to assume that $f\in L^2((a,b))$.

On the contrary, the asymptotic behavior of the minimum values seems to depend on the regularity of $f$ (see \cite[Open problem~3]{FastPM-CdV}). In particular, when $f\in C^1([a,b])$ one can prove that (\ref{th:min-asympt}) is true (see \cite[Theorem~2.2]{FastPM-CdV}).

When $\ep\to 0^+$, minimizers develop a staircase-like microstructure, as described in~\cite[Theorem~2.9]{FastPM-CdV}. In a few words, what happens is that the families of blow-ups (\ref{defn:vep}) and (\ref{defn:wep-old}) are relatively compact, and any limit point is a translation of some canonical staircase, with parameters depending on $f'(x_0)$. The rigorous statement is the following.

\begin{thmbibl}[Behavior of minimizers at scale $\omep$]\label{thm:BU}

Let us consider the family of functionals $\PMF_{\ep}(\beta,f,(0,1),u)$ defined by (\ref{defn:SPM}), where $\ep\in(0,1)$ and $\beta>0$ are two real numbers, and $f\in C^{1}([0,1])$ is a given function. Let $\{u_{\ep}\}\subseteq H^{2}((0,1))$ be a family of minimizers, and let $x_{\ep}\to x_{0}\in(0,1)$ be a family of points.

Let $\omep$ be defined by (\ref{defn:omep}), and let us consider the canonical $(H,V)$-staircase with parameters
\begin{equation}
H:=\left(\frac{24}{\beta^{2}|f'(x_{0})|^{3}}\right)^{1/5},
\qquad\qquad
V:=f'(x_{0})H,
\label{defn:HV}
\end{equation}
with the agreement that this staircase is identically equal to~0 when $f'(x_{0})=0$. 

Then the following statements hold true.

\begin{enumerate}
\renewcommand{\labelenumi}{(\arabic{enumi})}

\item \emph{(Compactness of blow-ups).} The family $\{v_{\ep}(y)\}$ defined by (\ref{defn:vep}) is relatively compact with respect to locally strict convergence, and every limit point is a graph translation of the canonical $(H,V)$-staircase.

More precisely, for every sequence $\{\ep_{n}\}\subseteq(0,1)$ with $\ep_{n}\to 0^{+}$ there exist an increasing sequence $\{n_{k}\}$ of positive integers and a function $v_{0}\in\orz(H,V)\cup\vrt(H,V)$ such that
\begin{equation}
v_{\ep_{n_{k}}}(y)\auto v_{0}(y)
\quad\text{in }BV\loc(\re).
\nonumber
\end{equation}

\item \emph{(Realization of all possible graph translations)}. Let $v_{0}\in\orz(H,V)\cup\vrt(H,V)$ be any graph translation of the canonical $(H,V)$-staircase. 

Then there exists a family $\{x'_{\ep}\}\subseteq(0,1)$ such that
\begin{equation*}
\limsup_{\ep\to 0^{+}}\frac{|x'_{\ep}-x_{\ep}|}{\omep}\leq H,
\end{equation*}
and
\begin{equation}
\frac{u_{\ep}(x'_{\ep}+\omep y)-u_{\ep}(x'_{\ep})}{\omep}\auto v_{0}(y)
\quad\text{in }BV\loc(\re).
\nonumber
\end{equation} 

\end{enumerate}

\end{thmbibl}

In the proof of~\cite[Theorem~2.9]{FastPM-CdV} we additionally derive the following two facts that are crucial for the higher order analysis of this paper. 

\begin{thmbibl}[Further properties of blow-ups]\label{thmbibl:add-on}

Under the same assumptions of Theorem~\ref{thm:BU}, let us assume that $v_{\ep_n}(y)\auto v_0(y)$ in $BV\loc(\re)$ for some sequence $\ep_n\to 0^+$.

Then the following additional facts hold true.

    \begin{enumerate}
    \renewcommand{\labelenumi}{(\arabic{enumi})}
    
       \item \emph{(Blow-ups are recovery sequences).} For every bounded interval $(a,b)\subseteq\re$ whose endpoints are not jump points of $v_0$ it holds that
        \begin{equation}\label{recovery_RPM}
        \lim_{n\to+\infty} \RPM_{\ep_{n}}((a,b),v_{\ep_{n}}) = 
        \frac{16}{\sqrt{3}}\cdot \J_{1/2}((a,b),v_0),
        \end{equation}
        namely $v_{\ep_n}$ is a recovery sequence of $v_0$ for the Gamma-converge result (\ref{th:Gamma-conv}) in the interval $(a,b)$.

        \item \emph{(Asymptotic behavior of the fidelity term).} It turns out that
        \begin{equation}
        \frac{f(x_{\ep_n}+\omepn y)-u_{\ep_n}(x_{\ep_n})}{\omepn}\to f'(x_0)y+a_0
        \nonumber
        \end{equation}
        uniformly on bounded sets, where the constant $a_0$ is such that the limiting straight-line passes through all the midpoints of the staircase steps of $v_0$. 

    \end{enumerate}
\end{thmbibl}

The formal proofs of Theorems~\ref{thm:BU} and~\ref{thmbibl:add-on} are extensive. However, for the reader's convenience, it may be helpful to provide a brief overview of the underlying idea, which is relatively straightforward and offers insight into the process.

We start by considering the family of blow-ups $\{w_\ep\}$ defined by (\ref{defn:wep-old}). Since $u_\ep$ is a minimizer of $\PMF_\ep$, with a change of variable one discovers that $w_\ep$ is a local minimizer (namely a minimizer up to perturbations with compact support) of the rescaled version $\RPMF_\ep$ in every admissible interval, with a new forcing term 
\begin{equation}
    g_{\ep}(y):= \frac{f(x_{\ep}+\omep y)-f(x_{\ep})}{\omep},
\nonumber
\end{equation}
which is the corresponding blow-up of $f$.

When $\ep\to 0^+$ it turns out that $g_\ep(y)\to f'(x_0)y$ uniformly on bounded sets, and $\RPMF_\ep$ Gamma-converges to
\begin{equation}
    \frac{16}{\sqrt{3}}\J_{1/2}(\Omega,w)+
    \beta\int_\Omega\left(w(y)-f'(x_0)y\right)^2\,dy,
\nonumber
\end{equation}
and hence it is reasonable that $w_\ep$ converges to some local minimizer of the latter. These local minimizers can be explicitly computed, and are exactly the oblique translations of the canonical staircase with parameters given by (\ref{defn:HV}).

The results for the family $\{v_\ep\}$ follows from the results for the family $\{w_\ep\}$ by simply observing that $v_\ep(y)=w_\ep(y)-w_\ep(0)$.

%\clearpage

\setcounter{equation}{0}
\section{Statement of main results -- Higher order analysis}
\label{sec:main}

In this paper we give a closer look at the minimizers of (\ref{defn:SPM}), in order to reveal a finer structure of horizontal and vertical parts of the steps. To accomplish this, we further expand $u_\ep$, or preferably $v_\ep$, whose graph is equivalent up to rescaling. Thanks to statement~(2) of Theorem~\ref{thm:BU} we can assume, without loss of generality, that the whole family $\{v_\ep\}$ converges to a limit staircase $v_0$ for which the origin is the midpoint of either the vertical or the horizontal segment of some step.

We start with the vertical regime, for which we introduce the following notion.
\begin{defn}[Cubic connection]\label{defn:cubic}
\begin{em}

For every pair $(\Lambda,V)$ of positive real numbers, we call \emph{$(\Lambda,V)$-cubic connection} the function $C_{\Lambda,V}:\re\to\re$ defined by
\begin{equation}
C_{\Lambda,V}(x):=
\begin{cases}
-V & \text{if }x\leq -\Lambda, 
\\[1ex]
\displaystyle{\frac{V}{2\Lambda^3}(3\Lambda^2 x-x^3)} & \text{if }x\in[-\Lambda,\Lambda],  
\\[1.5ex]
V  &  \text{if }x\geq\Lambda.
\end{cases}
\nonumber
\end{equation}

\end{em}
\end{defn}

In words, the $(\Lambda,V)$-cubic connection is the unique polynomial of degree three that interpolates the constants $-V$ and $+V$ in a $C^{1}$ way in the interval $[-\Lambda,\Lambda]$. 

The following is the first main result of this paper. It concerns the vertical components of the steps, and hence here we assume that $v_\ep$ converges to the vertical graph translation of the canonical $(H,V)$-staircase with the origin in the middle of a vertical segment.

\begin{thm}[Vertical parts]\label{thm:vertical}

Let us consider the same setting of Theorem~\ref{thm:BU}.

Let us assume in addition that $f'(x_0)\neq 0$, and that 
\begin{equation}
    v_\ep(y)\auto S_{H,V}(y-H)+V
    \quad\text{in }BV\loc(\re).
\nonumber
\end{equation}

Then it turns out that
\begin{equation*}
v_\ep(\ep^2 y)\to C_{\Lambda,V}(y)
\quad\mbox{strongly in }H^2 _{loc}(\R),
\end{equation*}
where $C_{\Lambda,V}(y)$ is the cubic connection introduced in Definition~\ref{defn:cubic}, with \begin{equation}
\Lambda:=\frac{\sqrt{3}}{2}\cdot\sqrt{2V}.
\label{defn:Lambda}
\end{equation}
    
\end{thm}

The second main result of this paper concerns horizontal components. Here we assume that $v_\ep$ converges to the canonical $(H,V)$-staircase. 

\begin{thm}[Horizontal parts]\label{thm:horizontal}

Let us consider the same setting of Theorem~\ref{thm:BU}.

Let us assume in addition that $f'(x_0)\neq 0$, and that 
\begin{equation}
    v_\ep(y)\auto S_{H,V}(y)
    \quad\text{in }BV\loc(\re).
    \label{hp:horiz}
\end{equation}

Then the family $\{v_\ep(y)/\omep^2\}$ is relatively compact in $C^1([a,b])$ for every interval $[a,b]\subseteq(-H,H)$, and every limit point $w_0$ is of the form
\begin{equation}
    w_0(x)=-\frac{\beta}{6}f'(x_0)x^3 + \zeta_0 x
    \qquad
    \forall x\in(-H,H),
\label{th:hor-structure}\end{equation}
for some $\zeta_0\in \R$.

\end{thm}

\begin{rmk}[The misterious $\zeta_0$]
\begin{em}

The precise value of the parameter $\zeta_0$ remains uncertain at present. It is highly probable that it depends on higher-order characteristics of the forcing term, such as $f''(x_0)$, and determining its value may necessitate an even more thorough analysis of the junctions between the horizontal and vertical components of the staircase steps. Additionally, it is conceivable that different sequences may converge to distinct limits, corresponding to different values of $\zeta_0$.
    
\end{em}
\end{rmk}

\begin{rmk}[Blow-ups at different resolutions]
\begin{em}

The results of this paper provide a comprehensive understanding of the blow-ups of minimizers within windows of equal length and height across varying scales. Let us consider, for a suitable $x_\ep\to x_0\in(0,1)$ and a suitable $\alpha_\ep\to 0$, the family of functions
\begin{equation}
    \widehat{v}_\ep(y):=
    \frac{u_\ep(x_\ep+\alpha_\ep y)-u_\ep(x_\ep)}{\alpha_\ep}.
\nonumber
\end{equation}

Then the behavior of the family $\{\widehat{v}_\ep\}$ depends on $\alpha_\ep$ as follows (we always assume that $f$ is of class $C^1$).
\begin{itemize}
    \item (Low resolution). If $\alpha_\ep\gg\omep$, then $\widehat{v}_\ep(y)\to f'(x_0)y$ uniformly on bounded subsets of $\re$ (see~\cite[Corollary~2.13]{FastPM-CdV}). 
 
    \item (Standard resolution). If $\alpha_\ep\sim\omep$, then the possible limits of $\widehat{v}_\ep(y)$ are the horizontal and vertical graph translations of the canonical staircase with parameters given by (\ref{defn:HV}).

    \item (High resolution). If $\alpha_\ep\ll\omep$, then the behavior depends on the behavior of the corresponding family at standard resolution. Let us assume that $f'(x_0)\neq 0$.
    \begin{itemize}
        \item If at standard resolution the limit is a staircase for which the origin lies in the interior of a horizontal segment, then $\widehat{v}_\ep(y)\to 0$ uniformly on bounded sets, namely this higher order blow-up is a horizontal line.

        \item If at standard resolution the limit is a staircase for which the origin lies in the interior of a vertical segment, then $\widehat{v}_\ep(y)$ tends in some sense (that could be made precise) to a vertical line.

        \item  If at standard resolution the limit is a staircase for which the origin lies in the corner between a horizontal and a vertical segment, then it is conceivable that intermediate behaviors may also emerge.
    \end{itemize}

    Intermediate behaviors might emerge at high resolution also when $f'(x_0)=0$.
\end{itemize}

\end{em}    
\end{rmk}

We conclude with some comments on the strategy of the proofs.

Theorem~\ref{thm:vertical} above follows from a general property of the recovery sequences for the Gamma-convergence result (\ref{th:Gamma-conv}).  In essence, the vertical result does not rely crucially on the minimality of $u_\ep$, but rather holds true for any family $v_\ep$ that satisfies (\ref{recovery_RPM}), namely that converges to a pure jump function without ``wasting'' energy. 

Here below we state this general property of recovery sequences, because it could be interesting in itself.

\begin{thm}[Structure of recovery sequences at jump points]\label{thm:vertical-step}

Let $L$ be a positive real number, and let $\{v_{\ep}\}\subseteq H^{2}((-L,L))$ be a family of functions such that $v_\ep(0)=0$ for every $\ep\in(0,1)$. 

Let us consider the family of functionals (\ref{defn:RPM}), and let us assume that there exists a real number $V>0$ such that
\begin{equation}
v_{\ep}(y)\auto V\cdot\sgn(y)
\qquad
\text{strictly in }BV((-L,L)),
\label{hp:vn2jump}
\end{equation}
where $\sgn(y)$ denotes the usual sign function, and
\begin{equation}
\lim_{\ep\to 0^+}\RPM_{\ep}((-L,L),v_{\ep})=
\frac{16}{\sqrt{3}}\cdot\sqrt{2V}.
\label{hp:ABGvn}
\end{equation}

Then for every interval $(a,b)\subseteq\re$ it turns out that
\begin{equation}
    v_\ep(\ep^2 y)\to C_{\Lambda,V}(y)
    \qquad
    \text{strongly in }H^2((a,b)).
\nonumber
\end{equation}

\end{thm}

On the contrary, Theorem~\ref{thm:horizontal} heavily relies on the minimality of $u_\ep$ and is thus more intricate in nature. The minimality, and not just the strict convergence or being a recovery sequence, is essential a first time in order to bound each horizontal part of minimizers within a rectangular box with a height proportional to $\omep^3$. Furthermore, in the second step of the proof the minimality is exploited once again to demonstrate that the derivative of minimizers avoids regions where $\log(1+p^2)$ is non-convex. This reduction allows for the application of more classical tools to establish convergence in the context of a singular perturbation of a convex functional (see Proposition~\ref{lemma:ECV2}).

%\clearpage

\setcounter{equation}{0}
\section{Technical preliminaries}\label{sec:lemmata}

The first result that we prove is a sort of quantitative version of the subadditivity of the square root. The intuitive idea is that, when the square root of a sum closely approximates the sum of square roots, then the sum contains a single prominent term.

\begin{lemma}[Quantitative subadditivity of the square root]\label{lemma:MRS}

Let $I$ be a finite or countable nonempty set of indices, and let $f:I\to[0,+\infty)$ be a function. Let us assume that the following three quantities are finite:
\begin{equation*}
S:=\sum_{i\in I}f(i),
\qquad\quad
R:=\sum_{i\in I}\sqrt{f(i)},
\qquad\quad
M:=\max\{f(i):i\in I\}.
\end{equation*}

Then it turns out that
\begin{equation}
\sqrt{S-M}\leq 3\left(R-\sqrt{S}\right).
\label{th:S-M}
\end{equation}

\end{lemma}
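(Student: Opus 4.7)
I would let $i_0 \in I$ be an index where the maximum $M$ is attained, set $I' := I \setminus \{i_0\}$, and introduce the shortened notation
\begin{equation*}
a := \sqrt{M}, \qquad b := \sqrt{S-M} = \sqrt{T} \quad \text{where } T := \sum_{i \in I'} f(i), \qquad R' := \sum_{i \in I'} \sqrt{f(i)}.
\end{equation*}
Then $\sqrt{S} = \sqrt{a^2+b^2}$, $R = a + R'$, and the target inequality becomes $b \leq 3(a + R' - \sqrt{a^2+b^2})$. The trivial case $M=0$ disposes of the situation where $a=0$, so I assume $a>0$ throughout.

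The plan rests on two lower bounds for $R'$. First, the standard subadditivity of the square root gives $R' \geq \sqrt{T} = b$. Second, the constraint $f(i) \leq M$ for $i \in I'$ yields $\sqrt{f(i)} \geq f(i)/a$, hence $R' \geq T/a = b^2/a$. I would also record the algebraic identity
\begin{equation*}
a + b - \sqrt{a^2+b^2} \;=\; \frac{2ab}{a+b+\sqrt{a^2+b^2}},
\end{equation*}
which makes the difference $R - \sqrt{S}$ amenable to direct estimation. After these preparations I would split into two cases according to whether $M$ dominates $T$ or not.

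In the case $a \geq b$, I use $R' \geq b$ together with the identity above and the bound $\sqrt{a^2+b^2} \leq a\sqrt{2}$ (since $b \leq a$) to estimate
\begin{equation*}
R - \sqrt{S} \;\geq\; a + b - \sqrt{a^2+b^2} \;\geq\; \frac{2ab}{a(2+\sqrt{2})} \;=\; (2-\sqrt{2})\,b \;>\; \tfrac{1}{3} b,
\end{equation*}
which is even stronger than what is needed. In the opposite case $a < b$, I use the sharper bound $R' \geq b^2/a$; a direct computation gives
\begin{equation*}
R - \sqrt{S} \;\geq\; \frac{a^2+b^2}{a} - \sqrt{a^2+b^2} \;=\; \frac{b^2\sqrt{a^2+b^2}}{a\bigl(a+\sqrt{a^2+b^2}\bigr)},
\end{equation*}
and the inequality $R - \sqrt{S} \geq b/3$ is equivalent (after clearing denominators and using $b > a$) to $3b^2 - ab - a^2 \geq 0$, which holds trivially because $3b^2 > 3a^2 > a^2 + ab$. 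Combining the two cases gives (\ref{th:S-M}).

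The only potentially delicate point is recognizing that the elementary subadditivity estimate $R' \geq \sqrt{T}$ is insufficient by itself when the largest term $M$ is much smaller than $T$; the extra ingredient $R' \geq T/\sqrt{M}$ coming from the pointwise cap $f(i)\leq M$ is exactly what closes the gap in that regime. Everything else is routine algebra on the two-variable expression $a + R' - \sqrt{a^2+b^2}$.
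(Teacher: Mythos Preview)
Your argument is correct, with one small slip: in the case $a<b$ you justify $3b^{2}-ab-a^{2}\ge 0$ by writing ``$3b^{2}>3a^{2}>a^{2}+ab$'', but the second inequality fails whenever $b>2a$; the fix is immediate, since $a<b$ gives $a^{2}+ab<b^{2}+b^{2}<3b^{2}$. (Also, ``equivalent'' should really be ``implied by'': you are tacitly using $\sqrt{a^{2}+b^{2}}\ge b$ to pass from $s(3b-a)\ge a^{2}$ to $b(3b-a)\ge a^{2}$.) Apart from this, your route genuinely differs from the paper's in the second case. The paper also splits according to whether $M\ge S/2$ or $M\le S/2$, and in the first case argues exactly as you do via $R\ge\sqrt{M}+\sqrt{S-M}$. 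In the second case, however, it does not use your pointwise estimate $\sqrt{f(i)}\ge f(i)/\sqrt{M}$; instead it observes that when every term is at most $S/2$ one can greedily choose $I_{1}\subseteq I$ with $S_{1}:=\sum_{I_{1}}f(i)\in[S/4,3S/4]$, applies subadditivity on each block to get $R\ge\sqrt{S_{1}}+\sqrt{S-S_{1}}$, and then uses the one-variable bound $\sqrt{x}+\sqrt{1-x}-1\ge(\sqrt{3}-1)/2>1/3$ on $[1/4,3/4]$ to conclude $R-\sqrt{S}\ge\tfrac{1}{3}\sqrt{S}\ge\tfrac{1}{3}\sqrt{S-M}$. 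Your approach stays with two-variable algebra and avoids the combinatorial partition; the paper's approach avoids your pointwise cap and any computation beyond a single concave function. Both deliver the same constant~$3$.
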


\begin{proof}

We distinguish two cases.

\subparagraph{\textmd{\textit{Case 1: $M\geq S/2$}}}

Let $i_{0}\in I$ be such that $f(i_{0})=M$, and let $I':=I\setminus\{i_{0}\}$. From the subadditivity of the square root we deduce that
\begin{equation*}
R=
\sqrt{f(i_{0})}+\sum_{i\in I'}\sqrt{f(i)}\geq
\sqrt{f(i_{0})}+\left(\sum_{i\in I'}f(i)\right)^{1/2}=
\sqrt{M}+\sqrt{S-M}.
\end{equation*}

Now we observe that the function $x\mapsto 1+\sqrt{x}-\sqrt{1+x}$ is increasing, and therefore
\begin{equation*}
1+\sqrt{x}-\sqrt{1+x}\geq 2-\sqrt{2}\geq\frac{1}{3}
\qquad
\forall x\geq 1.
\end{equation*}

Setting $x:=M/(S-M)$ (which is greater than or equal to~1 in this case), from the combination of these two inequalities we conclude that
\begin{equation*}
R-\sqrt{S}\geq\sqrt{S-M}+\sqrt{M}-\sqrt{S}\geq\frac{1}{3}\sqrt{S-M},
\end{equation*}
which proves (\ref{th:S-M}) in this case.

\subparagraph{\textmd{\textit{Case 2: $M\leq S/2$}}}

In this case there exists $I_{1}\subseteq I$ such that
\begin{equation*}
S_{1}:=\sum_{i\in I_{1}}f(i)\in\left[\frac{S}{4},\frac{3S}{4}\right].
\end{equation*}

Setting $I_{2}:=I\setminus I_{1}$, as before from the subadditivity of the square root we obtain that
\begin{equation*}
R=
\sum_{i\in I_{1}}\sqrt{f(i)}+\sum_{i\in I_{2}}\sqrt{f(i)}\geq
\left(\sum_{i\in I_{1}}f(i)\right)^{1/2}+
\left(\sum_{i\in I_{2}}f(i)\right)^{1/2}\geq
\sqrt{S_{1}}+\sqrt{S-S_{1}}.
\end{equation*}

Now we observe that $x\mapsto\sqrt{x}+\sqrt{1-x}-1$ is a concave function with equal values in $1/4$ and $3/4$, and in particular
\begin{equation*}
\sqrt{x}+\sqrt{1-x}-1\geq
\frac{\sqrt{3}-1}{2}\geq\frac{1}{3}
\qquad
\forall x\in\left[\frac{1}{4},\frac{3}{4}\right].
\end{equation*}

Setting $x:=S_{1}/S$ (which lies in the interval $[1/4,3/4]$), from the combination of these two inequalities we conclude that
\begin{equation*}
R-\sqrt{S}\geq\sqrt{S_{1}}+\sqrt{S-S_{1}}-\sqrt{S}\geq\frac{1}{3}\sqrt{S}\geq\frac{1}{3}\sqrt{S-M},
\end{equation*}
which proves (\ref{th:S-M}) in this case.
\end{proof}

%\clearpage

%\clearpage

\subsection{Estimates for the rescaled Perona-Malik functional}

In this subsection we collect some technical estimates for the singular perturbation of the Perona-Malik functional.

The first result concerns the minimization problem for the term with the second order derivatives. The proof is elementary (see~\cite[Lemma~6.2]{FastPM-CdV}).

\begin{lemma}\label{lemma:ABG}

Let $(a,b)\subseteq\re$ be an interval, and let ${A}_{0}$, ${A}_{1}$, ${B}_{0}$, ${B}_{1}$ be four real numbers. Let us consider the minimum problem
\begin{equation*}
\min\left\{\int_{a}^{b}u''(x)^{2}\,dx:
u\in H^{2}((a,b)),\ \left(u(a),u'(a),u(b),u'(b)\strut\right)=({A}_{0},{A}_{1},{B}_{0},{B}_{1})\right\}.
\end{equation*}

Then the unique minimum point is the unique cubic polynomial that satisfies the four boundary conditions, and the minimum value is
\begin{equation*}
\frac{({B}_{1}-{A}_{1})^{2}}{b-a}+
\frac{12}{(b-a)^{3}}\left[({B}_{0}-{A}_{0})-\frac{{A}_{1}+{B}_{1}}{2}(b-a)\right]^{2}.
\end{equation*}

\end{lemma}

%\clearpage

%\clearpage

In the second result we consider a general Perona-Malik functional of the form
\begin{equation}
    \PM_{\lambda,\mu,\nu}(\Omega, u) :=\int_\Omega \left[\lambda u''(x)^2 + \mu \log(1+\nu u'(x)^2) \right]dx,
    \label{defn:Fabc}
\end{equation}
where $\Omega\subseteq\re$ is any measurable set, $u\in H^2(\Omega')$ for some open set $\Omega'\supseteq\Omega$, and $\lambda$, $\mu$, $\nu$ are three positive real parameters.

In the following result we show some relations between this functional and the total variation of $u$ (for similar arguments we refer to~\cite[Lemma~A.1]{FastPM-CdV} or to the first part of the proof of~\cite[Proposition~3.3]{ABG}). 

\begin{lemma}[Perona-Malik functional vs total variation]\label{lemma:stima-ABGabc}
Let $\lambda$, $\mu$, $\nu$ be three positive real numbers, let $\PM_{\lambda,\mu,\nu}(\Omega, u)$ be the functional defined in (\ref{defn:Fabc}), let $(a,b)\subseteq\R$ be an interval, and let $u \in H^2((a,b))$. 

For every pair of real numbers $0<D\leq M$, let us consider the regions where the derivative of $u$ is high/small/intermediate, defined as
\begin{gather*}
    \mathcal{A}_M:=\{x \in (a,b) : |u'(x)|>M\},
    \qquad
    \mathcal{C}_D:=\{x \in (a,b) : |u'(x)|<D\},
    \\[0.5ex]
    \mathcal{B}_{D,M}:=\{x \in (a,b) : D\leq|u'(x)|\leq M\}.
\end{gather*}

Then in each region the total variation of $u$ and the value of the functional satisfy the following inequalities.
\begin{enumerate}
\renewcommand{\labelenumi}{(\arabic{enumi})}
    \item In the region with small derivatives it turns out that
    \begin{equation}
        \int_{\mathcal{C}_D} \mu \log\left(1+\nu u'(x)^2\right)dx\geq
        \frac{\mu\log\left(1+\nu D^2\right)}{D^2(b-a)}
        \left(\int_{\mathcal{C}_D}|u'(x)|\,dx\right)^2.
        \label{th:small-der}
    \end{equation}

    \item  In the region with intermediate derivatives it turns out that
    \begin{equation}
        \int_{\mathcal{B}_{D,M}} \mu \log\left(1+\nu u'(x)^2\right)dx\geq
        \frac{\mu\log\left(1+\nu D^2\right)}{M}\int_{\mathcal{B}_{D,M}}|u'(x)|\,dx.
        \label{th:interm-der}
    \end{equation}

    \item  In the region with high derivatives it turns out that
    \begin{equation}
        \int_{\mathcal{A}_M} \mu \log\left(1+\nu u'(x)^2\right)dx\geq
        \mu\log\left(1+\nu M^2\right)\cdot|\mathcal{A}_M|,
        \label{th:est-AM}
    \end{equation}
    where $|\mathcal{A}_M|$ denotes the Lebesgue measure of $\mathcal{A}_M$.

    If in addition $|u'(a)|<M$ and $|u'(b)|<M$, then in every connected component $(c,d)$ of $\mathcal{A}_M$ it turns out that
   \begin{equation}
        \PM_{\lambda,\mu,\nu}((c,d), u)\geq
        \Gamma\left\{\int_{c}^{d}|u'(x)|\,dx - M(d-c)\right\}^{1/2},
        \label{th:high-deriv-cc}
    \end{equation}
    where
    \begin{equation}
        \Gamma:=4\sqrt{\frac{2}{3}} 
        \left[\lambda\mu^{3}\log^3(1+\nu M^2)\right]^{1/4},
    \nonumber
    \end{equation}
    and as a consequence
    \begin{equation}
        \PM_{\lambda,\mu,\nu}(\mathcal{A}_{M}, u)\geq\Gamma
        \left\{\int_{\mathcal{A}_{M}}|u'(x)|\,dx - M|\mathcal{A}_M|\right\}^{1/2}.
        \label{th:high-deriv}
    \end{equation}

\end{enumerate}

\end{lemma}

\begin{proof}

From the concavity of the logarithm we deduce that
\begin{equation}
    \log(1+\nu u'(x)^2)\geq
    \frac{\log(1+\nu D^2)}{\nu D^2}\cdot\nu u'(x)^2
    \qquad
    \forall x\in \mathcal{C}_D,
\nonumber
\end{equation}
and therefore
\begin{eqnarray*}
    \int_{\mathcal{C}_D} \mu \log\left(1+\nu u'(x)^2\right)dx
    & \geq &
%    \mu\int_{\mathcal{C}_D}\log\left(1+\nu u'(x)^2\right)\,dx
%    \geq
    \frac{\mu\log(1+\nu D^2)}{D^2}\int_{\mathcal{C}_D}u'(x)^2\,dx
    \\
    & \geq &
    \frac{\mu\log\left(1+\nu D^2\right)}{D^2}\cdot\frac{1}{|\mathcal{C}_D|}\cdot\left(\int_{\mathcal{C}_D}|u'(x)|\,dx\right)^2,
    \qquad
\end{eqnarray*}
which implies (\ref{th:small-der}) because $|\mathcal{C}_D|\leq b-a$.

In the region with intermediate derivatives we observe that
\begin{equation}
    \int_{\mathcal{B}_{D,M}} \mu \log\left(1+\nu u'(x)^2\right)dx\geq
%    \mu\int_{\mathcal{B}_{D,M}}\log(1+\nu u'(x)^2)\,dx\geq
    \mu\log\left(1+\nu D^2\right)\cdot|\mathcal{B}_{D,M}|
    \label{est:BM}
\end{equation}
and
\begin{equation}
    \int_{\mathcal{B}_{D,M}}|u'(x)|\,dx\leq
    |\mathcal{B}_{D,M}|\cdot M.
\nonumber
\end{equation}

By combining these two inequalities we obtain (\ref{th:interm-der}).

In the region with high derivatives, the proof of (\ref{th:est-AM}) is analogous to the proof of (\ref{est:BM}) above.

Let us assume now that $|u'(a)|$ and $|u'(b)|$ are both less than $M$, and let $(c,d)$ be a connected component of $\mathcal{A}_M$. Then $c> a$ and $d<b$, and from the continuity of $u'$ we deduce that $u'$ has constant sign in $(c,d)$ and $|u'(c)|=|u'(d)|=M$, so that
$$\int_{c}^{d} |u'(x)|\,dx = |u(d)-u(c)|. $$

As a consequence, from Lemma~\ref{lemma:ABG} we deduce that
$$\int_{c}^{d} u''(x)^2\,dx\geq \frac{12}{(d-c )^3}\left(|u(d)-u(c)|-M(d-c )\strut\right)^2,$$
and hence
\begin{eqnarray*}
\PM_{\lambda,\mu,\nu}((c,d), u) & \geq &
\frac{12\lambda}{(d-c )^3}
\left(|u(d)-u(c)|-M(d-c )\strut\right)^2 
+\mu\log(1+\nu M^2)(d-c )
\\
& \geq & 
\frac{4}{3^{3/4}}\left[12\lambda\mu^{3}\log^3(1+\nu M^2)\right]^{1/4} 
\left\{\int_{c}^{d} |u'(x)|\,dx-M(d-c )\right\}^{1/2},
\end{eqnarray*}
where in the last step we exploited the inequality
$$P+Q\geq \frac{4}{3^{3/4}}(PQ^3)^{1/4}\qquad \forall (P,Q)\in [0,+\infty)^2.$$

This proves (\ref{th:high-deriv-cc}). Summing over all connected components of $\mathcal{A}_M$, and exploiting the subadditivity of the square root, we obtain (\ref{th:high-deriv}).
\end{proof}

%\clearpage

The following result provides an estimate from below for the rescaled Perona-Malik functional in terms of the total variation of the argument.  It suggests that achieving a certain total variation can be best done either by using a straight line, in which case the cost is determined by the linearization of the term with the logarithm, or by utilizing a suitable regularization of a function with a single jump, in which case the cost is determined by the Gamma-limit, specifically the square root of the jump height. 

\begin{lemma}[Bound from below for the rescaled Perona-Malik functional]\label{lemma:RPM>=}

Let us consider an interval $(a,b)\subseteq\re$, a real number $\ep\in(0,1)$, and a function $v\in H^2((a,b))$ such that $|v'(a)|<1/\ep$ and $|v'(b)|<1/\ep$. Let
\begin{equation}
    TV(v):=\int_a^b|v'(x)|\,dx
\nonumber
\end{equation}
denote the total variation of $v$ in $(a,b)$, and let $\RPM_\ep$ denote the rescaled Perona-Malik functional defined in (\ref{defn:RPM}).

Then it turns out that
\begin{equation}
    \RPM_\ep((a,b),v)\geq
    \min\left\{\frac{2^{13/4}}{3}\cdot[TV(v)]^{1/2},\frac{\log 2}{9(b-a)}\cdot\frac{[TV(v)]^2}{\omep^2}\right\}
    \label{th:RPM-below}
\end{equation}
for every $\ep$ small enough, and specifically for every $\ep\in(0,1)$ such that
\begin{equation}
    \frac{\log 2}{3}\cdot\frac{\ep}{\omep^2}\geq
    \left[\frac{64\sqrt{2}}{9}\cdot\frac{\log 2}{9(b-a)}\right]^{1/3}\cdot\frac{1}{\omep^{2/3}}.
    \label{hp:ep-small}
\end{equation}
 
\end{lemma}

\begin{proof}

We write $(a,b)$ as the union of the three sets
\begin{gather*}
    \mathcal{A}_\ep:=\left\{x\in(a,b):|v'(x)|>\frac{1}{\ep}\right\},
    \qquad
    \mathcal{B}_\ep:=\left\{x\in(a,b):1\leq|v'(x)|\leq\frac{1}{\ep}\right\},
    \\
    \mathcal{C}_\ep:=\left\{x\in(a,b):|v'(x)|<1\right\},
\end{gather*}
and we observe that we are in the same framework of Lemma~\ref{lemma:stima-ABGabc} with
\begin{equation}
    \lambda:=\ep^6,
    \qquad
    \mu:=\frac{1}{\omep^2},
    \qquad
    \nu:=1,
    \qquad
    D:=1,
    \qquad
    M:=\frac{1}{\ep}.
    \label{choices:RPM-est}
\end{equation}

Now we write the total variation of $v$ as
\begin{equation}
    \int_a^b|v'(x)|\,dx=
    \int_{\mathcal{C}_\ep}|v'(x)|\,dx+
    \left(\int_{\mathcal{A}_\ep}|v'(x)|\,dx-\frac{|\mathcal{A}_\ep|}{\ep}\right)+
    \left(\int_{\mathcal{B}_\ep}|v'(x)|\,dx+\frac{|\mathcal{A}_\ep|}{\ep}\right),
\nonumber
\end{equation}
and we deduce that at least one of the three terms in the right-hand side is greater than or equal to 1/3 of the left-hand side. Let us distinguish the three cases.

\begin{itemize}
    \item In the first case from (\ref{th:small-der}) with the choices (\ref{choices:RPM-est}) we deduce that
\begin{equation}
    \quad
    \RPM_\ep(\mathcal{C}_\ep,v)\geq
    \frac{1}{\omep^2}\cdot\frac{\log 2}{b-a}\left(\int_{\mathcal{C}_\ep}|v'(x)|\,dx\right)^2
    \geq
    \frac{\log 2}{b-a}\cdot\frac{1}{9}\cdot\frac{[TV(v)]^2}{\omep^2},
    \quad
\nonumber
\end{equation}
which implies (\ref{th:RPM-below}) in this first case.

\item In the second case from (\ref{th:high-deriv}) with the choices (\ref{choices:RPM-est}) we deduce that
\begin{eqnarray*}
%    \quad
%    \RPM_\ep((a,b),v)\geq
    \RPM_\ep(\mathcal{A}_\ep,v) & \geq &
    4\sqrt{\frac{2}{3}}\left[\frac{\log(1+1/\ep^2)}{|\log\ep|}\right]^{3/4}
    \left\{\int_{\mathcal{A}_\ep}|v'(x)|\,dx-\frac{|\mathcal{A}_\ep|}{\ep}\right\}^{1/2}
    \\[1ex]
    & \geq &
    4\sqrt{\frac{2}{3}}\cdot 2^{3/4}\cdot\frac{1}{\sqrt{3}}\cdot[TV(v)]^{1/2},
%    \quad
\end{eqnarray*}
which implies (\ref{th:RPM-below}) in this second case.

\item  In the third case we start by observing that
\begin{equation}
    \min\left\{\frac{2^{13/4}}{3}\cdot\frac{1}{\sqrt{x}},\frac{\log 2}{9(b-a)}\cdot\frac{x}{\omep^2}\right\}\leq
    \left[\frac{64\sqrt{2}}{9}\cdot\frac{\log 2}{9(b-a)}\right]^{1/3}\cdot\frac{1}{\omep^{2/3}}
\nonumber
\end{equation}
for every $x>0$, because the left-hand side is maximum when the two terms in the minimum coincide. Applying this inequality with $x:=TV(v)$, from (\ref{hp:ep-small}) we deduce that
\begin{equation}
    \frac{\log 2}{3}\cdot\frac{\ep}{\omep^2}\geq
    \min\left\{\frac{2^{13/4}}{3}\cdot\frac{1}{[TV(v)]^{1/2}},\frac{\log 2}{9(b-a)}\cdot\frac{TV(v)}{\omep^2}\right\}.
\nonumber
\end{equation}

At this point from (\ref{th:interm-der}) and (\ref{th:est-AM}) with the choices (\ref{choices:RPM-est}) we conclude that
\begin{multline*}
    \RPM_\ep(\mathcal{B}_\ep,v)+\RPM_\ep(\mathcal{A}_\ep,v)\geq
    \log 2\cdot\frac{\ep}{\omep^2}\left(\int_{\mathcal{B}_\ep}|v'(x)|\,dx+\frac{|\mathcal{A}_\ep|}{\ep}\right)
    \\
    \geq\min\left\{\frac{2^{13/4}}{3}\cdot\frac{1}{[TV(v)]^{1/2}},\frac{\log 2}{9(b-a)}\cdot\frac{TV(v)}{\omep^2}\right\}\cdot TV(v),
\end{multline*}
which implies (\ref{th:RPM-below}) also in the last case.
\qed

\end{itemize}
\renewcommand{\qedsymbol}{}

\end{proof}

%\clearpage

In the final result of this subsection, we calculate the values of the rescaled Perona-Malik functional with the fidelity term for both a minimizer and another function that shares the same values at the boundary, but differs in derivative. This yields a sort of one-sided local Lipschitz continuity of the values of the functional around a minimizer, with a constant that diminishes as $\ep\to 0^+$. This conclusion aligns with the intuition that the cost of a minor adjustment, which changes the derivative's values at the boundary, decreases as the singular perturbation term approaches zero. The rate $\ep^{4/3}$ could be slightly improved, but when we apply this result we need only that it is at least $\ep$.

\begin{lemma}[Dependence on boundary data for derivatives]\label{lemma:Lip-BCs}

Let us consider the functional $\RPMF_\ep(\beta,g_\ep,(a,b),v)$ defined by (\ref{defn:RPMF}), where $(a,b)\subseteq\re$ is an interval, $\ep\in(0,1)$ and $\beta\geq 0$ are real numbers, and $g_\ep\in L^2((a,b))$. 

Let $v$ and $\hatv$ be two functions in $H^2((a,b))$, and let us assume that
\begin{enumerate}
\renewcommand{\labelenumi}{(\roman{enumi})}
    \item $\hatv$ is a local minimizer, namely
    \begin{equation}
    \RPMF_\ep(\beta,g_\ep,(a,b),\hatv)\leq \RPMF_\ep(\beta,g_\ep,(a,b),\hatv+\phi)
        \qquad
        \forall\phi\in H^2_0((a,b)),
\nonumber
    \end{equation}

    \item the boundary conditions of $v$ and $\hatv$ satisfy $\hatv(a)=v(a)$, $\hatv(b)=v(b)$, and
    \begin{equation}
        |\hatv'(a)-v'(a)|+|\hatv'(b)-v'(b)|\leq 1,
        \label{hp:Delta'-small}
    \end{equation}

    \item there exists two real numbers $L_0$ and $K_0$ such that
\begin{equation}
    b-a\geq L_0,
    \qquad\text{and}\qquad
    \RPMF_\ep(\beta,g_\ep,(a,b),v)\leq K_0.
    \label{hp:energy-small}
\end{equation}

\end{enumerate}

Then there exists two positive real numbers $\Gamma_0$ and $\ep_0$, that depend only on $L_0$ and $K_0$, such that
\begin{multline}
\qquad
\RPMF_\ep(\beta,g_\ep,(a,b),\hatv)-\RPMF_\ep(\beta,g_\ep,(a,b),v)
\\[0.5ex]
\leq\Gamma_0\ep^{4/3}\left(|\hatv'(a)-v'(a)|+|\hatv'(b)-v'(b)|\strut\right),
\qquad
\label{th:Lip-BCs}
\end{multline}
provided that $\ep\in(0,\ep_0)$.
    
\end{lemma}

\begin{proof}

Let us compare, for any function $r\in H^2((a,b))$, the values of the functional computed in $v$ and $v+r$.

As for the term with the second order derivatives, we obtain that
\begin{eqnarray}
    \lefteqn{\hspace{-2em}
    \ep^6\int_a^b (v''(x)+r''(x))^2\,dx-\ep^6 \int_a^b v''(x)^2\,dx}
\nonumber
    \\
    & = &
    \ep^6 \int_a^b\left(2v''(x)r''(x)+r''(x)^2\right)dx
\nonumber
    \\
    & \leq &
    2\ep^3\left(\ep^6\int_a^b v''(x)^2\,dx\right)^{1/2}\left(\int_a^b r''(x)^2\,dx\right)^{1/2}
    +\ep^6\int_a^b r''(x)^2\,dx.
    \label{est:rep-2}
\end{eqnarray}

As for the term with the first order derivatives, we observe that the function $\sigma\mapsto\log(1+\sigma^2)$ is Lipschitz continuous, with Lipschitz constant equal to~1, and we obtain that
\begin{multline}
    \qquad
    \frac{1}{\omega(\ep)^2}\int_a^b\log\left(1+(v'(x)+r'(x))^2\right)dx-
    \frac{1}{\omega(\ep)^2}\int_a^b\log\left(1+v'(x)^2\right)dx
    \\
    \leq
    \frac{1}{\omega(\ep)^2}\int_a^b|r'(x)|\,dx.
    \qquad
    \label{est:rep-1}
\end{multline}

Finally, for the fidelity term we obtain that
\begin{eqnarray}
    \lefteqn{\hspace{-2em}
    \int_a^b (v(x)+r(x)-g_\ep(x))^2\,dx-\int_a^b (v(x)-g_\ep(x))^2\,dx}
\nonumber
    \\
    & = & 2\int_a^b r(x)(v(x)-g_\ep(x))\,dx+\int_a^b r(x)^2\,dx
\nonumber
    \\
    & \leq &
    2\left(\int_a^b r(x)^2\,dx\right)^{1/2}\left(\int_a^b(v(x)-g_\ep(x))^2\,dx\right)^{1/2}+\int_a^b r^2(x)\,dx.
    \label{est:rep-0}
\end{eqnarray}

Now we consider the function $\psi:[0,+\infty)\to[0,+\infty)$ defined by
\begin{equation}
    \psi(x):=
    \begin{cases}
        x(1-x)^2 & \text{if }x\in[0,1],
        \\
        0 & \text{if }x\geq 1,
    \end{cases}
    \label{defn:psi}
\end{equation}
and we observe that $\psi\in H^2((0,+\infty))$ because the junction in $x=1$ is of class $C^1$. Then we set
\begin{equation}
    \Delta_a:=\hatv'(a)-v'(a),
    \qquad\qquad
    \Delta_b:=\hatv'(b)-v'(b),
\nonumber
\end{equation}
and for $\ep^{10/3}<(b-a)/2$ we introduce the function
\begin{equation}
    r(x):=\ep^{10/3}\Delta_a\cdot\psi\left(\frac{x-a}{\ep^{10/3}}\right)
    -\ep^{10/3}\Delta_b\cdot\psi\left(\frac{b-x}{\ep^{10/3}}\right).
\nonumber
\end{equation}

The function $v+r$ has the same four boundary conditions of $\hatv$, and hence
\begin{equation}
    \RPMF_\ep(\beta,g_\ep,(a,b),\hatv)\leq
    \RPMF_\ep(\beta,g_\ep,(a,b),v+r).
\nonumber
\end{equation}

Therefore, (\ref{th:Lip-BCs}) is proved for every small enough $\ep$ if we show that
\begin{equation}
    \RPMF_\ep(\beta,g_\ep,(a,b),v+r)-\RPMF_\ep(\beta,g_\ep,(a,b),v)\leq
    \Gamma_0 \ep^{4/3}(\Delta_a+\Delta_b).
\nonumber
\end{equation}

To this end, with standard computations we obtain that
\begin{gather*}
    \int_a^b r(x)^2\,dx=\Theta_0\ep^{10}\left(|\Delta_a|^2+|\Delta_b|^2\right),
    \qquad
    \int_a^b |r'(x)|\,dx=\Theta_1\ep^{10/3}\left(|\Delta_a|+|\Delta_b|\strut\right),
    \\
    \int_a^b r''(x)^2\,dx=\frac{\Theta_2}{\ep^{10/3}}\left(|\Delta_a|^2+|\Delta_b|^2\right)
\end{gather*}
for three real constants $\Theta_0$, $\Theta_1$, $\Theta_2$ that depend only on suitable integrals of $\psi$ and its first and second derivative. Plugging all these equalities into (\ref{est:rep-2}), (\ref{est:rep-1}), and (\ref{est:rep-0}), and recalling the conditions (\ref{hp:energy-small}) and (\ref{hp:Delta'-small}), we obtain that (\ref{th:Lip-BCs}) holds true with a suitable constant $\Gamma_0$ when $\ep$ is small enough.
\end{proof}

%\clearpage

%\clearpage

\subsection{Singular perturbation of convex functionals}

In this subsection, we discuss the asymptotic behavior of the minimizers of a singular perturbation of a convex functional. While the outcome and concepts seem to be rooted in classical theory (as evidenced, for instance, by~\cite[Chapter~II]{1973-JLL}), we encountered difficulty in locating an appropriate citation in existing literature. Hence, for the reader's ease, we present and demonstrate the version essential to our discussion.

The setting is the following. Let us consider six real numbers
\begin{equation}
    a''<a'<a<b<b'<b'',
\nonumber
\end{equation}
and a real number $\ep_0>0$. For every $\ep\in(0,\ep_0)$, let us consider two real numbers $a_\ep\in(a'',a')$ and $b_\ep\in(b',b'')$, a continuous function $h_\ep:[a'',b'']\to\re$, and a function $\varphi_\ep\in C^2(\re)$.

Let us assume that
\begin{enumerate}
\renewcommand{\labelenumi}{(\roman{enumi})}

    \item for every $\sigma\in\re$ it turns out that $\varphi_\ep(\sigma)\to\sigma^2$ as $\ep\to 0^+$,

    \item  the function $\varphi_\ep$ satisfies the uniform convexity assumption
\begin{equation}
    \varphi_\ep''(\sigma)\geq 1
    \qquad
    \forall\sigma\in\re,
    \quad
    \forall \ep\in(0,\ep_0),
    \label{hp:phin-convex}
\end{equation}
and there exists a real number $M_0$ such that
\begin{equation}
    \varphi_\ep''(\sigma)\leq M_0
    \qquad
    \forall\sigma\in\re,
    \quad
    \forall \ep\in(0,\ep_0),
    \label{hp:phin''-above}
\end{equation}

\item there exists a continuous function $h_0:[a'',b'']\to\re$ such that, as $\ep\to 0^+$, 
\begin{equation}
    h_\ep(x)\to h_0(x)
    \qquad
    \text{uniformly in }[a'',b''].
\nonumber
\end{equation}

\end{enumerate} 

Finally, let $\eta_\ep\to 0^+$ and $\tau_\ep\to 0^+$ be two families of positive real numbers.

We consider the family of first order functionals
\begin{equation}
    \GG_\ep((a_\ep,b_\ep),z):=\int_{a_\ep}^{b_\ep}\left[\varphi_\ep(z'(x))+\eta_\ep^2 z(x)^2-2h_\ep(x)z(x)\right]dx,
\nonumber
\end{equation}
and the family of their second order singular perturbations
\begin{equation}
    \FF_\ep((a_\ep,b_\ep),w):=\tau_\ep^2\int_{a_\ep}^{b_\ep}w''(x)^2\,dx+\GG_\ep((a_\ep,b_\ep),w).
    \label{defn:Fn}
\end{equation}

The idea is that minimizers of $\FF_\ep$ tend to minimizers of the functional obtained by setting formally $\ep=0$ in the expression of $\GG_\ep$.

\begin{prop}[Compactness of minimizers for the singular perturbation]\label{lemma:ECV2}

Let us consider the family of functionals $\FF_\ep$ defined by (\ref{defn:Fn}) under the assumptions described above. For every $\ep\in(0,\ep_0)$, let $w_\ep\in H^2((a_\ep,b_\ep))$ be a local minimizer of $\FF_\ep$, namely a function such that
\begin{equation}
    \FF_\ep((a_\ep,b_\ep),w_\ep)\leq \FF_\ep((a_\ep,b_\ep),w_\ep+\phi)
    \qquad
    \forall\phi\in H^2_0((a_\ep,b_\ep)).
\nonumber
\end{equation}

Let us assume that the boundary conditions are uniformly bounded, namely 
\begin{equation}
    \sup\left\{|w_\ep(a_\ep)|+|w_\ep'(a_\ep)|+|w_\ep(b_\ep)|+|w_\ep'(b_\ep)|:\ep\in(0,\ep_0)\strut\right\}<+\infty.
\nonumber
\end{equation}

Then the family $\{w_\ep\}$ is relatively compact in $H^1((a',b'))$ and in $C^1([a,b])$. Moreover, every limit point $w_0$ belongs to $C^2([a',b'])$ and satisfies
\begin{equation}
    w_0''(x)=-h_0(x)
    \qquad
    \forall x\in[a',b'].
\nonumber
\end{equation}

\end{prop}

\begin{proof}

The idea is to write $w_\ep$ as
\begin{equation}
    w_\ep(x)=z_\ep(x)+\theta_\ep(x)+\rho_\ep(x)
    \qquad
    \forall x\in[a_\ep,b_\ep],
\nonumber
\end{equation}
where
\begin{itemize}

\item $z_\ep$ is the (unique, by convexity) minimizer of $\GG_\ep$ with boundary conditions 
\begin{equation}
    z_\ep(a_\ep)=w_\ep(a_\ep),
    \qquad\qquad
    z_\ep(b_\ep)=w_\ep(b_\ep),
\nonumber
\end{equation}

\item  $\theta_\ep$ is the (unique, by convexity) minimizer of the functional
\begin{equation}
    \HH_\ep((a_\ep,b_\ep),\theta):=
    \int_{a_\ep}^{b_\ep}
    H_\ep(x,\theta(x),\theta'(x),\theta''(x))\,dx,
\nonumber
\end{equation}
where
\begin{equation}
    H_\ep(x,s,p,q):=\tau_\ep ^2 q^2+
    \left[\varphi_\ep(z_\ep'(x)+p)-\varphi_\ep(z_\ep'(x))-\varphi_\ep'(z_\ep'(x))\cdot p\right]+
    \eta_\ep^2 s^2,
\nonumber
\end{equation}
with boundary conditions
\begin{equation}
    \theta_\ep(a_\ep)=\theta_\ep(b_\ep)=0,
    \qquad
    \theta_\ep'(a_\ep)=w_\ep'(a_\ep)-z_\ep'(a_\ep),
    \qquad
    \theta_\ep'(b_\ep)=w_\ep'(b_\ep)-z_\ep'(b_\ep),
\nonumber
\end{equation}

\item  $\rho_\ep$ is a suitable remainder, that satisfies necessarily
\begin{equation}
    \rho_\ep(a_\ep)=\rho_\ep(b_\ep)=\rho_\ep'(a_\ep)=\rho_\ep'(b_\ep)=0.
    \label{BC:theta_ep}
\end{equation}

\end{itemize}

Roughly speaking, $z_\ep$ is the solution to the problem without the singular perturbation term, $\theta_\ep$ takes into account the loss of the two boundary conditions on the derivative, and $\rho_\ep$ is what remains.

We claim the following.
\begin{enumerate}
    \item The family $\{z_\ep\}$ is bounded in $C^2([a_\ep,b_\ep])$, and in particular its restriction to $[a',b']$ is relatively compact in $C^1([a',b'])$. Moreover, any limit point $z_0$ belongs to $C^2([a',b'])$ and satisfies
\begin{equation}
    z_0''(x)=-h_0(x)
    \qquad
    \forall x\in[a',b'].
    \label{th:ECV1}
\end{equation}

\item  The family $\{\theta_\ep\}$ is bounded in $C^1([a_\ep,b_\ep])$, and %for every interval $[a'',b'']\subseteq(a,b)$ it turns out that
\begin{equation}
    \theta_\ep\to 0
    \qquad
    \text{both in }H^1((a',b'))\text{ and in }C^1([a,b]).
\nonumber
\end{equation}

\item  The family $\{\rho_\ep\}$ is bounded in $H^2((a_\ep,b_\ep))$ and
\begin{equation}
    \rho_\ep\to 0
    \qquad
    \text{in }C^1([a',b']). 
\nonumber
\end{equation}

\end{enumerate}

These claims are enough to establish all our conclusions, and therefore in the rest of the proof we can limit ourselves to showing them.

\paragraph{\textmd{\textit{Compactness of $\{z_\ep\}$}}}

This part is rather standard, and so we limit ourselves to sketching the main points. To begin with, we recall that, since we are dealing with convex functions of class $C^2$, the pointwise convergence $\varphi_\ep(\sigma)\to\sigma^2$ implies that the convergence is uniform on compact sets, and also  $\varphi_\ep'(\sigma)\to 2\sigma$ uniformly on compact sets.

Comparing $z_\ep$ with the affine function with the same boundary conditions, we obtain that $\GG_\ep((a_\ep,b_\ep),z_\ep)$ is bounded independently of $\ep$. Thanks to (\ref{hp:phin-convex}), from this bound we deduce that $z_\ep'$ is bounded in $L^2((a_\ep,b_\ep))$, which in turn implies that $z_\ep$ is bounded in $C^0([a_\ep,b_\ep])$ because of the bound on boundary data. The classical regularity theory shows that actually $z_\ep\in C^2([a_\ep,b_\ep])$, and it satisfies the Euler-Lagrange equation \begin{equation}
    \varphi_\ep''(z_\ep'(x))\cdot z_\ep''(x)=2\eta_\ep^2 z_\ep(x)-2h_\ep(x)
    \qquad
    \forall x\in[a_\ep,b_\ep].
\nonumber
\end{equation}

Due to the uniform convexity assumption (\ref{hp:phin-convex}), and to the uniform bounds on $z_\ep$, the latter implies that the family $\{z_\ep\}$ is bounded in $C^2([a_\ep,b_\ep])$, and hence relatively compact in $C^1([a',b'])$.

Finally, if $z_\ep\to z_0$ in $C^1([a',b'])$, even up to subsequences, then we can pass to the limit in the Euler-Lagrange equation, at least when written in its weak form, and we conclude that the %convergence is actually in $C^2([a',b'])$, and the 
limit $z_0$ satisfies (\ref{th:ECV1}).

\paragraph{\textmd{\textit{Convergence of $\{\theta_\ep\}$}}}

To begin with, we observe the key property of $\HH_\ep$, namely that
\begin{equation}
    \HH_\ep((a_\ep,b_\ep),\theta)\geq
    \tau_\ep^2\int_{a_\ep}^{b_\ep}\theta''(x)^2\,dx+\frac{1}{2}\int_{a_\ep}^{b_\ep}\theta'(x)^2\,dx
    \qquad
    \forall\theta\in H^2((a_\ep,b_\ep)),
    \label{est:HH-below}
\end{equation}
which follows again from the bound (\ref{hp:phin-convex}). Similarly, from (\ref{hp:phin''-above}) one obtains the bound from above
\begin{equation}
    \HH_\ep((a_\ep,b_\ep),\theta)\leq
    \tau_\ep^2\int_{a_\ep}^{b_\ep}\theta''(x)^2\,dx+
    \frac{M_0}{2}\int_{a_\ep}^{b_\ep}\theta'(x)^2\,dx+
    \eta_\ep^2\int_{a_\ep}^{b_\ep}\theta(x)^2\,dx.
    \label{est:HH-above}
\end{equation}

Now we show that
\begin{equation}
    \HH_\ep((a_\ep,b_\ep),\theta_\ep)=O(\tau_\ep).
\label{est:HH-aep-bep}
\end{equation}

To this end, it is enough to exhibit a competitor $\widehat{\theta}_\ep$, with the same four boundary conditions of $\theta_\ep$, such that $\HH_\ep((a_\ep,b_\ep),\widehat{\theta}_\ep)=O(\tau_\ep)$. For this purpose, we consider again the function $\psi:[0,+\infty)\to[0,+\infty)$ defined by (\ref{defn:psi}), and we set
\begin{equation}
    \widehat{\theta}_{\ep}(x):=
    \tau_\ep(w_\ep'(a_\ep)-z_\ep'(a_\ep))\cdot\psi\left(\frac{x-a_\ep}{\tau_\ep}\right)-
    \tau_\ep(w_\ep'(b_\ep)-z_\ep'(b_\ep))\cdot\psi\left(\frac{b_\ep-x}{\tau_\ep}\right).
\nonumber
\end{equation}

We observe that, when $\tau_\ep<(b-a)/2$, the function $\widehat{\theta}_\ep$ has the same four boundary conditions of $\theta_\ep$. In addition, $\widehat{\theta}_\ep(x)$ is equal to 0 in $(a_\ep,b_\ep)$, with the exception of two intervals of length $\tau_\ep$ at the boundary, where
\begin{equation}
    \widehat{\theta}_\ep(x)=O(\tau_\ep),
    \qquad\quad
    \widehat{\theta}_\ep'(x)=O(1),
    \qquad\quad
    \widehat{\theta}_\ep''(x)=O(1/\tau_\ep).
\nonumber
\end{equation}

When we plug these relations into (\ref{est:HH-above}) we deduce that $\HH_\ep((a_\ep,b_\ep),\widehat{\theta}_\ep)=O(\tau_\ep)$, which proves (\ref{est:HH-aep-bep}). Taking (\ref{est:HH-below}) into account, and recalling that $\theta_\ep$ vanishes at the boundary of $(a_\ep,b_\ep)$, this is enough to conclude that $\theta_\ep\to 0$ in $H^1((a',b'))$. 

In order to improve this convergence, we take again (\ref{est:HH-below}) into account, and we deduce that for every $x\in[a_\ep,b_\ep]$ it turns out that
\begin{multline}
    \qquad
    \tau_\ep\left|\theta_\ep'(x)^2-\theta_\ep'(a_\ep)^2\right|\leq
    \tau_\ep\int_{a_\ep}^{b_\ep}2|\theta_\ep'(t)|\cdot|\theta_\ep''(t)|\,dt
    \\
    \leq
    \int_{a_\ep}^{b_\ep}\left[\tau_\ep^2\theta_\ep''(t)^2+\theta_\ep'(t)^2\right]dt\leq
    2\HH_\ep((a_\ep,b_\ep),\theta_\ep)=
     O(\tau_\ep).    
    \qquad
\label{est:theta-ptwise}
\end{multline}

Since $\theta_\ep'(a_\ep)$ and $\theta_\ep(a_\ep)$ are bounded, this is enough to establish the boundedness of the family $\{\theta_\ep\}$ in $C^1([a_\ep,b_\ep])$.

Now let us consider the interval $(a_\ep,a_\ep+\tau_\ep^{1/2})$. From (\ref{est:HH-below}) and (\ref{est:HH-aep-bep}) we deduce that there exists at least one point $c_\ep$ in this interval such that
\begin{equation}
    |\theta_\ep'(c_\ep)|=O(\tau_\ep^{1/4}),
\nonumber
\end{equation}
and in addition
\begin{equation}
    |\theta_\ep(c_\ep)|\leq
    \int_{a_\ep}^{c_\ep}|\theta_\ep'(x)|\,dx\leq
    |c_\ep-a_\ep|^{1/2}\left\{\int_{a_\ep}^{c_\ep}|\theta_\ep'(x)|^2\,dx\right\}^{1/2}=
     O(\tau_\ep^{3/4}).
\nonumber
\end{equation}

Similarly, there exists $d_\ep\in(b_\ep-\tau_\ep^{1/2},b_\ep)$ such that
\begin{equation}
    |\theta_\ep'(d_\ep)|=O(\tau_\ep^{1/4})
    \qquad\quad\text{and}\quad\qquad
    |\theta_\ep(d_\ep)|=O(\tau_\ep^{3/4}).
\nonumber
\end{equation}

Now we consider the interval $(c_\ep,d_\ep)$, and we claim that
\begin{equation}
    \HH_\ep((c_\ep,d_\ep),\theta_\ep)=O(\tau_\ep^{3/2}),
    \label{est:HH-aep'-bep'}
\end{equation}
which is similar to (\ref{est:HH-aep-bep}), but with a better exponent. To this end, we observe that $\theta_\ep$ is a local minimizer in this interval, and therefore it is enough to exhibit a competitor $\widehat{\theta}_\ep$, with the same four boundary conditions in $(c_\ep,d_\ep)$ for which the estimate holds. This time we can choose 
\begin{equation}
    \widehat{\theta}_\ep(x):=
    \theta_\ep(c_\ep)+P_\ep\cdot(x-c_\ep)+
    Q_\ep\tau_\ep\cdot\psi\left(\frac{x-c_\ep}{\tau_\ep}\right)-    
    D_\ep\tau_\ep\cdot\psi\left(\frac{d_\ep-x}{\tau_\ep}\right),
\nonumber
\end{equation}
where again $\psi$ is defined by (\ref{defn:psi}) and
\begin{equation}
    P_\ep:=\frac{\theta_\ep(d_\ep)-\theta_\ep(c_\ep)}{d_\ep-c_\ep},
    \qquad
    Q_\ep:=\theta_\ep'(c_\ep)-P_\ep,
    \qquad
    D_\ep:=\theta_\ep'(d_\ep)-P_\ep.
\nonumber
\end{equation}

In words, this function is the straight line that connects the boundary conditions, with the usual correction in two intervals of length $\tau_\ep$ at the boundary in order to fit also the boundary conditions on the derivative. One can check that in the two intervals at the boundary it turns out that 
\begin{equation}
    \widehat{\theta}_\ep(x)=O(\tau_\ep^{3/4}),
    \qquad\quad
    \widehat{\theta}_\ep'(x)=O(\tau_\ep^{1/4}),
    \qquad\quad
    \widehat{\theta}_\ep''(x)=O(\tau_\ep^{-3/4}),
\nonumber
\end{equation}
while in the rest of $(c_\ep,d_\ep)$ it turns out that
\begin{equation}
    \widehat{\theta}_\ep(x)=O(\tau_\ep^{3/4}),
    \qquad\quad
    \widehat{\theta}_\ep'(x)=P_\ep=O(\tau_\ep^{3/4}),
    \qquad\quad
    \widehat{\theta}_\ep''(x)=0,
\nonumber
\end{equation}
and these relations imply (\ref{est:HH-aep'-bep'}) because of an estimate of the form (\ref{est:HH-above}) in $(c_\ep,d_\ep)$.

At this point, arguing as in (\ref{est:theta-ptwise}) we find that
\begin{equation}
    \tau_\ep\left|\theta_\ep'(x)^2-\theta_\ep'(c_\ep)^2\right|\leq
     O(\tau_\ep^{3/2})
    \qquad
    \forall x\in [c_\ep,d_\ep].
\nonumber
\end{equation}

Since $\theta_\ep'(c_\ep)\to 0$ and $\theta_\ep(c_\ep)\to 0$, this is enough to conclude that
\begin{equation}
    \lim_{\ep\to 0^+}\max\left\{|\theta_\ep(x)|+|\theta_\ep'(x)|:x\in[c_\ep,d_\ep]\strut\right\}=0,
\nonumber
\end{equation}
and in particular $\theta_\ep\to 0$ in $C^1([a,b])$.

\paragraph{\textmd{\textit{Convergence of $\{\rho_\ep\}$}}}

We observe that $z_\ep+\theta_\ep$ has the same boundary conditions of $w_\ep$, both for the function and for the derivative, and therefore from the minimality of $w_\ep$ it follows that
\begin{equation}
    \FF_\ep((a_\ep,b_\ep),z_\ep+\theta_\ep+\rho_\ep)=
    \FF_\ep((a_\ep,b_\ep),w_\ep)\leq
    \FF_\ep((a_\ep,b_\ep),z_\ep+\theta_\ep).
\nonumber
\end{equation}

When we expand all the terms, this inequality turns out to be equivalent to
\begin{multline*}
    \tau_\ep^2\int_{a_\ep}^{b_\ep}\left[
    (\rho_\ep'')^2+2z_\ep''\rho_\ep''+2\theta_\ep''\rho_\ep''
    \right]dx+
    \int_{a_\ep}^{b_\ep}\left[
    \varphi_\ep(z_\ep'+\theta_\ep'+\rho_\ep')-\varphi_\ep(z_\ep'+\theta_\ep')
    \right]dx
    \\
    +\eta_\ep^2\int_{a_\ep}^{b_\ep}\left[
    \rho_\ep^2+2z_\ep\rho_\ep+2\theta_\ep\rho_\ep
    \right]dx-
    2\int_{a_\ep}^{b_\ep}h_\ep\rho_\ep\,dx\leq 0,
\end{multline*}
which in turn, by adding and subtracting equal terms, can be written in the form
\begin{equation}
    L_1+L_2+L_3+L_4\leq 0,
\nonumber
\end{equation}
where
\begin{gather*}
    L_1:=\tau_\ep^2\int_{a_\ep}^{b_\ep}(\rho_\ep'')^2\,dx+
    \frac{1}{2}\int_{a_\ep}^{b_\ep}(\rho_\ep')^2\,dx+
    \eta_\ep^2\int_{a_\ep}^{b_\ep}\rho_\ep^2\,dx+
    2\tau_\ep^2\int_{a_\ep}^{b_\ep}z_\ep''\rho_\ep''\,dx,
    \\
    L_2:=\int_{a_\ep}^{b_\ep}\left[
    \varphi_\ep(z_\ep'+\theta_\ep'+\rho_\ep')-
    \varphi_\ep(z_\ep'+\theta_\ep')-
    \varphi_\ep'(z_\ep'+\theta_\ep')\rho_\ep'
    -\frac{1}{2}(\rho_\ep')^2\right]dx
    \\
    L_3:=\int_{a_\ep}^{b_\ep}\varphi_\ep'(z_\ep')\rho_\ep'\,dx+
    2\int_{a_\ep}^{b_\ep}\left[\eta_\ep^2 z_\ep-h_\ep\right]\rho_\ep\,dx,
    \\
    L_4:=
    2\tau_\ep^2\int_{a_\ep}^{b_\ep}\theta_\ep''\rho_\ep''\,dx+
    \int_{a_\ep}^{b_\ep}\left[\varphi_\ep'(z_\ep'+\theta_\ep')-\varphi_\ep'(z_\ep')\right]\rho_\ep'\,dx+
    2\eta_\ep^2\int_{a_\ep}^{b_\ep}\theta_\ep\rho_\ep\,dx.
\end{gather*}

Now we observe that
\begin{itemize}
    \item $L_2\geq 0$ because of the convexity of $\varphi_\ep$ with the bound (\ref{hp:phin-convex}),

    \item  $L_3=0$ because it is the first variation of $\GG_\ep$, computed in the minimimizer $z_\ep$, with respect to the variation $\rho_\ep$,

    \item  $L_4=0$ because it is the first variation of $\HH_\ep$, computed in the minimimizer $\theta_\ep$, with respect to the variation $\rho_\ep$.
    
\end{itemize}

In conclusion we end up with
\begin{multline*}
    \quad
    \tau_\ep^2\int_{a_\ep}^{b_\ep}\rho_\ep''(x)^2\,dx+
    \frac{1}{2}\int_{a_\ep}^{b_\ep}\rho_\ep'(x)^2\,dx+
    \eta_\ep^2\int_{a_\ep}^{b_\ep}\rho_\ep(x)^2\,dx\\[0.5ex]
    \leq
    -2\tau_\ep^2\int_{a_\ep}^{b_\ep}z_\ep''(x)\rho_\ep''(x)\,dx\leq
    \frac{1}{2}\tau_\ep^2\int_{a_\ep}^{b_\ep}\rho_\ep''(x)^2\,dx+
    2\tau_\ep^2\int_{a_\ep}^{b_\ep}z_\ep''(x)^2\,dx,
    \quad
\end{multline*}
and hence
\begin{equation}
    \frac{1}{2}\tau_\ep^2\int_{a_\ep}^{b_\ep}\rho_\ep''(x)^2\,dx+
    \frac{1}{2}\int_{a_\ep}^{b_\ep}\rho_\ep'(x)^2\,dx+
    \eta_\ep^2\int_{a_\ep}^{b_\ep}\rho_\ep(x)^2\,dx\leq
    2\tau_\ep^2\int_{a_\ep}^{b_\ep}z_\ep''(x)^2\,dx.
\nonumber
\end{equation}

Since we know that $\{z_\ep\}$ is bounded in $C^2([a_\ep,b_\ep])$, we deduce that $\{\rho''_\ep\}$ is bounded in $L^2((a_\ep,b_\ep))$, and the norm of $\rho_\ep'$ in $L^2((a_\ep,b_\ep))$ tends to 0. Taking into account the boundary conditions (\ref{BC:theta_ep}), this is enough to conclude that the norm of $\rho_\ep$ in $C^1([a_\ep,b_\ep])$, and a fortiori in $C^1([a',b'])$, tends to 0.
\end{proof}

%\clearpage

\setcounter{equation}{0}
\section{Vertical regions -- Proof of Theorems~\ref{thm:vertical-step} and~\ref{thm:vertical}}\label{sec:vertical}

In this section we prove Theorem~\ref{thm:vertical-step}, which in turn implies Theorem~\ref{thm:vertical}. We commence by defining
\begin{equation}
    w_\ep(y):=v_\ep(\ep^2 y).
\nonumber
\end{equation}

The key point of the proof lies in demonstrating that the transition of $v_\ep$, and hence also of $w_\ep$, from $-V$ to $V$ occurs within a single interval, which we term the ``big jump'', with a length approximately proportional to $\ep^2$ in the case of $v_\ep$, and hence of the order of a constant in the case of $w_\ep$. Within this interval, we establish uniform estimates for $w_\ep$, $w_\ep'$, and $w_\ep''$, enabling us to uniquely characterize the limit. 

To begin with, we observe that $w_{\ep}(y)$ is defined for every $|y|<L/\ep^{2}$, and with a change of variable in the integral we obtain that
\begin{equation}
\RPM_{\ep}((-L,L),v_{\ep})=\RPMV_{\ep}\left((-L/\ep^{2},L/\ep^{2}),w_{\ep}\right),
\label{eqn:ABGvn-ABGwn}
\end{equation}
where $\RPMV_\ep$ denotes the rescaled Perona-Malik functional for vertical regions, defined by
\begin{equation*}
\RPMV_\ep(\Omega,w):= \int_\Omega \left\{ w''(y)^2 + \frac{1}{|\log\ep|}\log\left(1+\frac{w'(y)^2}{\ep^4}\right)\right\}dy.
\end{equation*}

Assumption (\ref{hp:vn2jump}) implies that $v_\ep(y)\to V$ uniformly in $[a,L]$, and hence $w_\ep(y)\to V$ uniformly in $[a/\ep^2,L/\ep^2]$, for every $a\in(0,L)$. Similarly,  $w_\ep(y)\to -V$ uniformly in $[-L/\ep^2,-a/\ep^2]$, for every $a\in(0,L)$. Moreover, assumption (\ref{hp:ABGvn}) implies that the right-hand side of (\ref{eqn:ABGvn-ABGwn}) is bounded from above by some constant $\Gamma$, at least when $\ep\in(0,\ep_0)$ for some $\ep_0$ small enough, and therefore any interval of length at least $\Gamma|\log\ep|/\log 2$ contains at least one point $y$ where $|w_\ep'(y)|\leq\ep^2$. 

As a consequence, there exist
\begin{equation}
    a_\ep\in\left[-\frac{L}{\ep^2},-\frac{L}{\ep^2}+\frac{\Gamma}{\log 2}|\log\ep|\right]
    \qquad\text{and}\qquad
    b_\ep\in\left[\frac{L}{\ep^2}-\frac{\Gamma}{\log 2}|\log\ep|,\frac{L}{\ep^2}\right]
\nonumber
\end{equation}
such that %[bisogna forse spiegare un po' le prime 2]
\begin{equation}
    w_\ep(a_\ep)\to -V,
    \qquad
    w_\ep(b_\ep)\to V,
    \qquad
    |w_\ep'(a_\ep)|\leq\ep^2,
    \qquad
    |w_\ep'(b_\ep)|\leq\ep^2.
    \label{defn:aep-bep}
\end{equation}

In particular, from the first two relations, and the strict convergence assumption (\ref{hp:ABGvn}), we obtain that
\begin{equation}
    2V\leq
    \liminf_{\ep\to 0^+}\int_{a_\ep}^{b_\ep}|w_\ep'(y)|\,dy\leq
    \lim_{\ep\to 0^+}\int_{-L/\ep^2}^{L/\ep^2}|w_\ep'(y)|\,dy=
    \lim_{\ep\to 0^+}\int_{-L}^{L}|v_\ep'(x)|\,dx=
    2V,
\nonumber
\end{equation}
which implies that
\begin{equation}
    \lim_{\ep\to 0^+}\int_{a_\ep}^{b_\ep}|w_\ep'(y)|\,dy=2V.
    \label{strict-conv}
\end{equation}

\paragraph{\textmd{\textit{Identification of the ``big jump''}}}

For every $\ep\in(0,\ep_0)$ we write $(a_\ep,b_\ep)$ as the disjoint union of the sets
\begin{gather*}
\mathcal{A}_{\ep}:=\left\{y\in(a_\ep,b_\ep):|w_{\ep}'(y)|> \frac{1}{|\log\ep|^2}\right\},
\qquad
\mathcal{C}_{\ep}:=\left\{y\in(a_\ep,b_\ep):|w_{\ep}'(y)|< \ep^{3/2}\right\},\\
\mathcal{B}_{\ep}:=\left\{y\in(a_\ep,b_\ep): \ep^{3/2}\leq |w_{\ep}'(y)|\leq\frac{1}{|\log\ep|^2}\right\},
\end{gather*}
and we further write the open set $\mathcal{A}_\ep$ as the union of its connected components
\begin{equation*}
\mathcal{A}_{\ep}=\bigcup_{i\in I_{\ep}}(c_{\ep,i},d_{\ep,i}),
\end{equation*}
where $I_{\ep}$ is a finite or countable set of indices. We also set
\begin{equation}
\Delta_{\ep,i}:=
\int_{c_{\ep,i}}^{d_{\ep,i}}|w_{\ep}'(y)|\,dy
\qquad\qquad\text{and}\qquad\qquad
\widehat{\Delta}_{\ep,i}:=
\Delta_{\ep,i}-\frac{d_{\ep,i}-c_{\ep,i}}{|\log\ep|^2}.
\nonumber
\end{equation}

We claim that for every $\ep\in(0,\ep_0)$ there exists an index $i(\ep)\in I_{\ep}$ such that
\begin{equation}
\lim_{\ep\to 0^+}\Delta_{\ep,i(\ep)}=
2V.
\label{th:big-jump}
\end{equation}

In words, this means that asymptotically the whole total variation of $w_{\ep}$ is realized in a single special interval $(c_{\ep,i(\ep)},d_{\ep,i(\ep)})$. We divide the proof into five steps.
\begin{itemize}

\item  Step 1. We show that 
\begin{equation}
\limsup_{\ep\to 0^+}|\mathcal{A}_{\ep}|<+\infty
\label{th:A-bounded}
\end{equation}
and
\begin{equation}
\lim_{\ep\to 0^+}\int_{\mathcal{C}_\ep}|w_\ep'(y)|\,dy=
\lim_{\ep\to 0^+}\int_{\mathcal{B}_\ep}|w_\ep'(y)|\,dy=
\lim_{\ep\to 0^+}\frac{|\mathcal{A}_{\ep}|}{|\log\ep|^2}=
0.
\label{th:TV-BC-bounded}
\end{equation}

To this end, it is enough to apply Lemma~\ref{lemma:stima-ABGabc} with
\begin{equation}
    \lambda:=1,
    \qquad
    \mu:=\frac{1}{|\log\ep|},
    \qquad
    \nu:=\frac{1}{\ep^4},
    \qquad
    D:=\ep^{3/2},
    \qquad
    M:=\frac{1}{|\log\ep|^2}.
    \label{choices:vertical}
\end{equation}

With these choices, (\ref{th:small-der}), (\ref{th:interm-der}), and (\ref{th:est-AM}) read as
\begin{gather*}
    \left(\int_{\mathcal{C}_\ep}|w_\ep'(y)|\,dy\right)^2\leq
    \RPMV_\ep((a_\ep,b_\ep),w_\ep)\cdot\ep^3(b_\ep-a_\ep)\cdot\frac{|\log\ep|}{\log(1+\ep^{-1})},
    \\[0.5ex]
    \int_{\mathcal{B}_\ep}|w_\ep'(y)|\,dy\leq
    \RPMV_\ep((a_\ep,b_\ep),w_\ep)\cdot
    \frac{|\log\ep|}{\log(1+\ep^{-1})}\cdot
    \frac{1}{|\log\ep|^2},
    \\[0.5ex]
    |\mathcal{A}_{\ep}|\leq
    \RPMV_\ep((a_\ep,b_\ep),w_\ep)\cdot
    \frac{|\log\ep|}{\log(1+\ep^{-4}|\log\ep|^{-4})}.
\end{gather*}

Recalling that $\RPMV_\ep((a_\ep,b_\ep),w_\ep)$ is bounded from above, and $b_\ep-a_\ep\leq 2L/\ep^2$, when we let $\ep\to 0^+$ we obtain exactly both (\ref{th:A-bounded}) and (\ref{th:TV-BC-bounded}).

\item  Step 2. We show that
\begin{equation}
\lim_{\ep\to 0^+}\sum_{i\in I_{\ep}}\Delta_{\ep,i}=
\lim_{\ep\to 0^+}\sum_{i\in I_{\ep}}\widehat{\Delta}_{\ep,i}=
2V.
\label{th:int-AnC}
\end{equation}

Indeed we observe that
\begin{equation}
    \sum_{i\in I_{\ep}}\Delta_{\ep,i}=
    \int_{\mathcal{A}_\ep}|w_\ep'(y)|\,dy=
    \int_{a_\ep}^{b_\ep}|w_\ep'(y)|\,dy-\int_{\mathcal{B}_\ep}|w_\ep'(y)|\,dy-\int_{\mathcal{C}_\ep}|w_\ep'(y)|\,dy,
\nonumber
\end{equation}
and
\begin{equation}
    \sum_{i\in I_{\ep}}\widehat{\Delta}_{\ep,i}=
    \sum_{i\in I_{\ep}}\Delta_{\ep,i}-
    \frac{|\mathcal{A}_{\ep}|}{|\log\ep|^2}.
\nonumber
\end{equation}

At this point (\ref{th:int-AnC}) follows from (\ref{strict-conv}) and (\ref{th:TV-BC-bounded}).

\item  Step 3. We show that
\begin{equation}
\limsup_{\ep\to 0^+}\sum_{i\in I_{\ep}}\left(\widehat{\Delta}_{\ep,i}\right)^{1/2}\leq
\sqrt{2V}.
\label{th:limsup-hat}
\end{equation}

Indeed, for every $i\in I_\ep$ we apply (\ref{th:high-deriv-cc}) of Lemma~\ref{lemma:stima-ABGabc}, whose assumptions are satisfied with the choices (\ref{choices:vertical}) because of the two inequalities in (\ref{defn:aep-bep}), and we obtain that
$$\RPMV_{\ep}((c_{\ep,i},d_{\ep,i}),w_{\ep})\geq 
4\sqrt{\frac{2}{3}}\left\{\frac{1}{|\log\ep|} \log\left(1+\frac{1}{\ep^4|\log\ep|^4}\right)\right\}^{3/4} \left(\widehat{\Delta}_{\ep,i}\right)^{1/2}.$$

Summing over all indices $i\in I_\ep$ we deduce that
\begin{equation*}
\RPMV_{\ep}((a_{\ep},b_{\ep}),w_{\ep})\geq
4\sqrt{\frac{2}{3}}\left\{\frac{1}{|\log\ep|} \log\left(1+\frac{1}{\ep^4|\log\ep|^4}\right)\right\}^{3/4} \sum_{i\in I_{\ep}}\left(\widehat{\Delta}_{\ep,i}\right)^{1/2},
\end{equation*}
and, thanks to (\ref{eqn:ABGvn-ABGwn}) and (\ref{hp:ABGvn}), we conclude by letting $\ep\to 0^+$.

\item  Step 4. We show that 
\begin{equation}
\lim_{\ep\to 0^+}\sum_{i\in I_{\ep}}\left(\widehat{\Delta}_{\ep,i}\right)^{1/2}=
\sqrt{2V}.
\label{th:lim-hat}
\end{equation}

Indeed, by combining (\ref{th:limsup-hat}), the subadditivity of the square root, and (\ref{th:int-AnC}), we obtain the chain of inequalities
\begin{multline*}
\qquad
\sqrt{2V}\geq
\limsup_{\ep\to 0^+}\sum_{i\in I_{\ep}}\left(\widehat{\Delta}_{\ep,i}\right)^{1/2}\geq
\liminf_{\ep\to 0^+}\sum_{i\in I_{\ep}}\left(\widehat{\Delta}_{\ep,i}\right)^{1/2}\geq
\\[1ex]
\geq\liminf_{\ep\to 0^+}\left(\sum_{i\in I_{\ep}}\widehat{\Delta}_{\ep,i}\right)^{1/2}=
\left(\liminf_{\ep\to 0^+}\sum_{i\in I_{\ep}}\widehat{\Delta}_{\ep,i}\right)^{1/2}=
\sqrt{2V}.
\qquad
\end{multline*}

This means that all inequalities are actually equalities, which proves (\ref{th:lim-hat}).

\item  Step 5. We prove (\ref{th:big-jump}).

To this end, we apply Lemma~\ref{lemma:MRS} with $I:=I_{\ep}$ and $f(i):=\widehat{\Delta}_{\ep,i}$, and we deduce that
\begin{equation*}
0\leq
\left(\sum_{i\in I_{\ep}}\widehat{\Delta}_{\ep,i}-\max_{i\in I_{\ep}}\widehat{\Delta}_{\ep,i}\right)^{1/2}\leq
3\sum_{i\in I_{\ep}}\left(\widehat{\Delta}_{\ep,i}\right)^{1/2}-
3\left(\sum_{i\in I_{\ep}}\widehat{\Delta}_{\ep,i}\right)^{1/2}.
\end{equation*}

The right-hand side tends to~0 because of (\ref{th:lim-hat}) and (\ref{th:int-AnC}), and hence there exists $i(\ep)\in I_{\ep}$ such that
\begin{equation*}
\lim_{\ep\to 0^+}\widehat{\Delta}_{\ep,i(\ep)}=
\lim_{\ep\to 0^+}\sum_{i\in I_{\ep}}\widehat{\Delta}_{\ep,i}=
2V.
\end{equation*}

At this point the conclusion follows from the last equality in (\ref{th:TV-BC-bounded}) because %We conclude by observing that
\begin{equation}
    \widehat{\Delta}_{\ep,i(\ep)}\leq
    \Delta_{\ep,i(\ep)}=
    \widehat{\Delta}_{\ep,i(\ep)}+\frac{d_{\ep,i(\ep)}-c_{\ep,i(\ep)}}{|\log\ep|^2}\leq
    \widehat{\Delta}_{\ep,i(\ep)}+\frac{|\mathcal{A}_{\ep}|}{|\log\ep|^2}.
\nonumber
\end{equation}

\end{itemize}

\paragraph{\textmd{\textit{Uniform estimates in the special interval}}}

For the sake of shortness let us set $c_{\ep}:=c_{\ep,i(\ep)}$ and $d_{\ep}:=d_{\ep,i(\ep)}$. We recall that the two inequalities in (\ref{defn:aep-bep}) imply that $c_\ep$ and $d_\ep$ are different, respectively, from $a_\ep$ and $b_\ep$, and therefore
\begin{equation}
|w_{\ep}'(c_{\ep})|=|w_{\ep}'(d_{\ep})|=\frac{1}{|\log\ep|^2}\to 0.
\label{lim:wn'-an}
\end{equation}

We claim that the following three facts hold true.
\begin{itemize}

\item  The family $\{w_{\ep}(y)\}$ satisfies
\begin{equation}
\lim_{\ep\to 0^+}\left(
\max_{y\in[a_\ep,c_{\ep}]}|w_{\ep}(y)+V|+
\max_{y\in[d_{\ep},b_\ep]}|w_{\ep}(y)-V|
\right)=0,
\label{th:wn-uc}
\end{equation}
and in particular
\begin{equation}
w_{\ep}(c_{\ep})\to -V
\qquad\text{and}\qquad
w_{\ep}(d_{\ep})\to V.
\label{lim:wn-an}
\end{equation}

\item  When $\ep$ is small enough it turns out that 
\begin{equation*}
-|\mathcal{A}_{\ep}|<c_{\ep}<0<d_{\ep}<|\mathcal{A}_{\ep}|,
\end{equation*}
and due to (\ref{th:A-bounded}) this implies that the families $\{c_{\ep}\}$ and $\{d_{\ep}\}$ are bounded.

\item  For every bounded interval $(a,b)\subseteq\re$ it turns out that
\begin{equation}
\{w_{\ep}\}
\text{ is bounded in }H^{2}((a,b)),
\label{th:wn-bounded-H2}
\end{equation}
of course with a bound that depends on $(a,b)$.

\end{itemize}

Let us prove these claims. To begin with, from (\ref{strict-conv}) and (\ref{th:big-jump}) we know that
\begin{equation*}
\lim_{\ep\to 0^+}
\int_{a_\ep}^{c_{\ep}}|w_{\ep}'(y)|\,dy+\int_{d_{\ep}}^{b_\ep}|w_{\ep}'(y)|\,dy=0.
\end{equation*}

Taking the first two properties in (\ref{defn:aep-bep}) into account, this is enough to establish (\ref{th:wn-uc}), from which we deduce in particular that
\begin{equation*}
w_{\ep}(y)\leq-\frac{V}{2}<0
\qquad
\forall y\in[a_\ep,c_{\ep}]
\end{equation*}
when $\ep$ is small enough. Since $w_{\ep}(0)=v_\ep(0)=0$ for every $\ep\in(0,1)$, we conclude that $c_{\ep}<0$, and also
\begin{equation*}
d_{\ep}=c_{\ep}+(d_{\ep}-c_{\ep})\leq
c_{\ep}+|\mathcal{A}_{\ep}|<|\mathcal{A}_{\ep}|.
\end{equation*}

In a symmetric way we obtain that $d_{\ep}>0$ and $c_{\ep}>-|\mathcal{A}_{\ep}|$.

Finally, the bound on $\RPMV_{\ep}((a_\ep,b_\ep),w_{\ep})$ yields immediately a uniform bound on the norm of $w_{\ep}''$ in $L^{2}((a_\ep,b_\ep))$. Together with the pointwise bounds coming from (\ref{lim:wn-an}) and (\ref{lim:wn'-an}), and the boundedness of $\{c_{\ep}\}$ and $\{d_{\ep}\}$, this implies (\ref{th:wn-bounded-H2}).

\paragraph{\textmd{\textit{Passing to the limit}}}

Exploiting again that the families $\{c_{\ep}\}$ and $\{d_{\ep}\}$ are bounded, up to subsequences (not relabeled) we can assume that $c_{\ep}\to c_{0}$ and $d_{\ep}\to d_{0}$. 

Now let us fix any interval $(a,b)\supseteq(c_{0},d_{0})$. From (\ref{th:wn-bounded-H2}) we can assume that there exists $w_{0}\in H^{2}((a,b))$ such that, up to further subsequences (again not relabeled),
\begin{equation}
w_{\ep}\to w_{0}
\quad\text{and}\quad
w_{\ep}'\to w_{0}'
\quad\text{uniformly in }[a,b],
\nonumber
\end{equation}
and
\begin{equation}
w_{\ep}''\rightharpoonup w_{0}''
\quad\text{weakly in }L^{2}((a,b)).
\nonumber
\end{equation}

We claim that $w_{0}$ is the cubic connection. To begin with, we observe that  $w_0(0)=0$ because by definition $w_\ep(0)=v_\ep(0)=0$ for every $\ep\in(0,1)$, while from (\ref{lim:wn-an}) and (\ref{lim:wn'-an}) we obtain that
\begin{equation}
w_{0}(c_{0})=-V,
\qquad
w_{0}(d_{0})=V,
\qquad
w_{0}'(c_{0})=w_{0}'(d_{0})=0.
\label{BC:w-infty}
\end{equation}

Due to these boundary conditions, from Lemma~\ref{lemma:ABG} we deduce that
\begin{equation*}
\int_{c_{0}}^{d_{0}}w_{0}''(y)^{2}\,dy\geq
\frac{12(2V)^{2}}{(d_{0}-c_{0})^{3}}.
\end{equation*}

Now we consider the chain of inequalities
\begin{eqnarray}
\frac{16}{\sqrt{3}}\sqrt{2V} & \geq &
\limsup_{\ep\to 0^+}\RPMV_{\ep}((a_\ep,b_\ep),w_{\ep})
\nonumber\\
& \geq & 
\limsup_{\ep\to 0^+}\left\{
\int_{c_{\ep}}^{d_{\ep}}\frac{1}{|\log\ep|}\log\left(1+\frac{w_{\ep}'(y)^{2}}{\ep^{4}}\right)\,dy+
\int_{a}^{b}w_{\ep}''(y)^{2}\,dy
\right\}
\nonumber\\
& \geq & 
\limsup_{\ep\to 0^+}\left\{\frac{1}{|\log\ep|}\log\left(\frac{1}{\ep ^4 |\log\ep|^4}\right)(d_{\ep}-c_{\ep})+
\int_{a}^{b}w_{\ep}''(y)^{2}\,dy
\right\}
\nonumber\\
& = & 
4(d_{0}-c_{0})+
\limsup_{\ep\to 0^+}\int_{a}^{b}w_{\ep}''(y)^{2}\,dy
\nonumber\\
& \geq & 
4(d_{0}-c_{0})+
\int_{a}^{b}w_{0}''(y)^{2}\,dy
\label{eq:strong}
\\
& \geq & 
4(d_{0}-c_{0})+
\int_{c_{0}}^{d_{0}}w_{0}''(y)^{2}\,dy
\label{eq:0-outside}
\\
& \geq & 
4(d_{0}-c_{0})+
12\frac{(2V)^{2}}{(d_{0}-c_{0})^{3}}
\label{eq:cubic}
\\
& \geq & \frac{16}{\sqrt{3}}\sqrt{2V}.
\label{eq:lenght}
\end{eqnarray}

Since the first and last term coincide, all inequalities are actually equalities, and each of them gives us some piece of information.
\begin{itemize}

\item  The equality in (\ref{eq:lenght}) implies that $d_{0}-c_{0}=2\Lambda$, where $\Lambda$ is defined by (\ref{defn:Lambda}).

\item  The equality in (\ref{eq:cubic}) implies that $w_{0}$ is the unique polynomial of degree three with the boundary data (\ref{BC:w-infty}). Combining with the previous point and the fact that $w_0(0)=0$, we conclude that $w_{0}$ coincides with the  $(\Lambda,V)$-cubic connection in the interval $(c_{0},d_{0})=(-\Lambda,\Lambda)$.

\item  The equality in (\ref{eq:0-outside}) implies that $w_{0}''(y)$ vanishes outside $(c_{0},d_{0})$. Since $w_{0}'(c_{0})=w_{0}'(d_{0})=0$, this is enough to conclude that $w_{0}$ is constant both in $(a,c_{0})$ and in $(d_{0},b)$, and hence it coincides with the $(\Lambda,V)$-cubic connection in $(a,b)$.

\item  The equality in (\ref{eq:strong}) implies that actually $w_{\ep}''\to w_{0}''$ strongly in $L^{2}((a,b))$.

\end{itemize}

Finally, we observe that the previous steps characterize in a unique way the possible limits of subsequences, and this is enough to conclude the convergence of the whole family $\{w_\ep\}$ to the  $(\Lambda,V)$-cubic connection. 
\qed

%\clearpage

\setcounter{equation}{0}
\section{Horizontal regions -- Proof of Theorem~\ref{thm:horizontal}}\label{sec:horiz}

Let us consider the family of functions
\begin{equation}
    w_\ep(y):=\frac{v_\ep(y)}{\omep^2}
    \qquad
    \forall y\in(-H,H).
    \label{defn:wep-hor}
\end{equation}

In the first part of the proof we show that the family $\{w_\ep\}$ is bounded in $C^0([a,b])$ for every interval $[a,b]\subseteq(-H,H)$, while in the second part we show that the same family is relatively compact in $C^1([a,b])$. We also show that all the limit points are solutions to a limit minimum problem, whose minimizers are the cubic polynomials (\ref{th:hor-structure}).

\subsection{Boundedness of the uniform norm}\label{sec:horiz-1}

In this part of the proof we show that
\begin{equation}
    \limsup_{\ep\to 0^+}\max\{|w_\ep(y)|:x\in[a,b]\}<+\infty.
    \label{th:wep-bounded}
\end{equation}

In a nutshell, the idea is the following. With a change of variable in the integrals we obtain that
$$\PMF_{\ep}(\beta,f,(0,1),u_{\ep})=\omep^3 \RPMF_{\ep}(\beta,g_{\ep},(-x_{\ep}/\omep,(1-x_{\ep})/\omep),v_{\ep}),$$
where 
\begin{equation}
    g_{\ep}(y):= \frac{f(x_{\ep}+\omep y)-u_{\ep}(x_{\ep})}{\omep}.
\label{defn:gep}
\end{equation}

Since $u_\ep$ is a minimizer of $\PMF_\ep$, we deduce that $v_\ep$ is a minimizer of $\RPMF_\ep$ with respect to its own boundary conditions in every admissible interval. In particular, if we consider any point $c\in(H,2H)$, we know that
\begin{equation}
    \RPMF_\ep(\beta,g_\ep,(a,c),v_\ep)\leq
    \RPMF_\ep(\beta,g_\ep,(a,c),v_\ep+\phi)
    \qquad
    \forall\phi\in H^2_0((a,c))
\nonumber
\end{equation}
when $\ep$ is small enough. In the interval $(a,c)$ the family $\{v_\ep(y)\}$ converges to a piecewise constant function with a jump from 0 to $2V$ in $y=H$.  If condition (\ref{th:wep-bounded}) does not hold, then $v_\ep$ exhibits a significant oscillation in $[a,b]$, namely ``before the jump''. This substantial oscillation incurs a cost in terms of rescaled Perona-Malik energy, which might initially seem offset by the need for a ``shorter jump'' in $(b,c)$. However, we show that eliminating this oscillation, even at the expense of a larger jump in $(b,c)$, yields a better competitor, which contradicts the minimality of $v_\ep$ in $(a,c)$.

In order to make this idea more quantitative, we consider a larger interval $(a',b')$ with $-H<a'<a<b<b'<H$, and an interval $(c,c')$ with $H<c<c'<2H$. From assumption (\ref{hp:horiz}) and the properties of strict convergence we know that
\begin{equation}
    v_\ep(y)\to 0
    \qquad
    \text{uniformly in }[a',b'],
    \label{th:vep-to-0}
\end{equation}
\begin{equation}
    v_\ep(y)\to 2V
    \qquad
    \text{uniformly in }[c,c'],
    \label{th:vep-to-2V}
\end{equation}
while from statement~(1) of Theorem~\ref{thmbibl:add-on} we know that
\begin{equation}
    \limsup_{\ep\to 0^+}\RPM_\ep((a',c'),v_\ep)<+\infty.
    \label{th:RPM-bounded}
\end{equation}

Now we consider the interval $[a',a]$, and we observe that
\begin{equation}
    \frac{1}{\omep^2}\int_{a'}^a \log\left(1+v_\ep'(y)^2\right)\,dy\leq
    \RPM_\ep((a',a),v_\ep).
\nonumber
\end{equation}

The right-hand side is bounded for $\ep$ small enough because of (\ref{th:RPM-bounded}), and hence there exists a family $\{a_\ep\}\subseteq[a',a]$ such that $|v_\ep'(a_\ep)|=O(\omep)$. Arguing in the same way in the intervals $[b,b']$ and $[c,c']$, and taking (\ref{th:vep-to-0}) and (\ref{th:vep-to-2V}) into account, we conclude that there exist three families $\{a_\ep\}\subseteq[a',a]$, $\{b_\ep\}\subseteq[b,b']$ and $\{c_\ep\}\subseteq[c,c']$ such that
\begin{equation}
    v_\ep(a_\ep)\to 0,
    \qquad\qquad
    v_\ep(b_\ep)\to 0,
    \qquad\qquad
    v_\ep(c_\ep)\to 2V,
\label{th:lim-abc}
\end{equation}
and for which there exists a positive real number $\gamma_1$ such that
\begin{equation}
    |v_\ep'(a_\ep)|\leq\gamma_1\omep,
    \qquad\quad
    |v_\ep'(b_\ep)|\leq\gamma_1\omep,
    \qquad\quad
    |v_\ep'(c_\ep)|\leq\gamma_1\omep
\label{est:vep'-abc}
\end{equation}
for every small enough $\ep$.

We observe also that the functions $g_{\ep}(y)$ defined by (\ref{defn:gep}) are uniformly bounded on bounded sets because of statement~(2) of Theorem~\ref{thmbibl:add-on}.

Now we set
\begin{equation}
    m_\ep:=\max\{|v_\ep(y)|:x\in[a_\ep,b_\ep]\},
\nonumber
\end{equation}
and we observe that $m_\ep\to 0$ because of the uniform convergence (\ref{th:vep-to-0}). We claim that actually
\begin{equation}
    \limsup_{\ep\to 0^+}\frac{m_\ep}{\omep^2}<+\infty,
\label{th:wep-bounded-bis}
\end{equation}
which implies (\ref{th:wep-bounded}). The proof of the claim is derived from the following two estimates (all the constants $\gamma_i$ introduced hereafter are independent of $\ep$, and all the estimates are understood to be true when $\ep$ is small enough).
\begin{itemize}

\item  (Estimate from below). There exist two positive real numbers $\gamma_2$ and $\gamma_3$ such that
\begin{equation}
\RPM_{\ep}((a_{\ep},b_{\ep}),v_{\ep})\geq
\min\left\{\gamma_2\sqrt{m_{\ep}},\gamma_3\frac{m_{\ep}^{2}}{\omep^{2}}\right\}.
\label{th:RPM>=}
\end{equation}

\item  (Estimate from above). There exist two real numbers $\gamma_4$ and $\gamma_5$ such that
\begin{eqnarray}
\RPMF_{\ep}(\beta,g_{\ep},(a_{\ep},c_{\ep}),v_{\ep})& \leq &
\RPMF_{\ep}(\beta,g_{\ep},(b_{\ep},c_{\ep}),v_{\ep})+\beta\int_{a_{\ep}}^{b_{\ep}}g_{\ep}(y)^{2}\,dy
\nonumber   \\
& & \mbox{}+
\gamma_4 m_{\ep}+\gamma_5\omep^{2}.
\label{th:RPMF<=}
\end{eqnarray}

\end{itemize}

These two claims are enough to conclude. Indeed, from the definition of $m_\ep$, and the uniform boundedness of $g_\ep(y)$, it follows that
\begin{equation}
    \int_{a_\ep}^{b_\ep}(v_\ep(y)-g_\ep(y))^2\,dy\geq
    \int_{a_\ep}^{b_\ep}g_\ep(y)^2\,dy-\gamma_6 m_\ep.
\nonumber
\end{equation}

Thanks to this estimate, by combining (\ref{th:RPM>=}) and (\ref{th:RPMF<=}) we obtain the chain of inequalities 
\begin{eqnarray*}
\lefteqn{
\RPMF_{\ep}(\beta,g_{\ep},(b_{\ep},c_{\ep}),v_{\ep})+\beta\int_{a_{\ep}}^{b_{\ep}}g_{\ep}(y)^{2}\,dy+
\gamma_4 m_{\ep}+\gamma_5\omep^{2}}
\\[1ex]
& \geq & \RPMF_{\ep}(\beta,g_{\ep},(a_{\ep},c_{\ep}),v_{\ep})
\\[1ex]
& = & \RPM_{\ep}((a_{\ep},b_{\ep}),v_{\ep})
+\beta\int_{a_{\ep}}^{b_{\ep}}(v_{\ep}(y)-g_{\ep}(y))^{2}\,dy
+\RPMF_{\ep}(\beta,g_{\ep},(b_{\ep},c_{\ep}),v_{\ep})
\\[1ex]
& \geq & 
\min\left\{\gamma_2\sqrt{m_{\ep}},\gamma_3\frac{m_{\ep}^{2}}{\omep^{2}}\right\}
+\beta\int_{a_{\ep}}^{b_{\ep}}g_{\ep}(y)^{2}\,dy
-\beta\gamma_6 m_{\ep}+
\RPMF_{\ep}(\beta,g_{\ep},(b_{\ep},c_{\ep}),v_{\ep}),
\end{eqnarray*}
from which we deduce that
\begin{equation}
\min\left\{\gamma_2\sqrt{m_{\ep}},\gamma_3\frac{m_{\ep}^{2}}{\omep^{2}}\right\}\leq
(\gamma_4+\beta\gamma_6) m_{\ep}+\gamma_5\omep^{2},
\label{est:Mk-O}
\end{equation}
which in turn implies (\ref{th:wep-bounded-bis}). Indeed, if this is not the case, then there exists a subsequence (not relabeled) such that $m_{\ep}/\omep^{2}\to +\infty$ as $\ep\to 0^+$. Since eventually $m_{\ep}\neq 0$ along this subsequence, we can divide (\ref{est:Mk-O}) by $m_{\ep}$ and obtain that
\begin{equation*}
\min\left\{\frac{\gamma_2}{\sqrt{m_{\ep}}},\gamma_3\frac{m_{\ep}}{\omep^{2}}\right\}\leq
\gamma_4+\beta\gamma_6+\gamma_5\frac{\omep^{2}}{m_{\ep}},
\end{equation*}
which is absurd because the left-hand side tends to $+\infty$ when $\ep\to 0^+$, while the right-hand side remains bounded.

The rest of the proof is devoted to showing (\ref{th:RPM>=}) and (\ref{th:RPMF<=}).

\paragraph{\textmd{\textit{Estimate from below}}}

Since $v_\ep(0)=0$, the total variation of $v_\ep$ in $(a_\ep,b_\ep)$ is greater than or equal to $m_\ep$. Consequently, (\ref{th:RPM>=}) follows from Lemma~\ref{lemma:RPM>=} applied to the function $v_\ep$ in the interval $(a_\ep,b_\ep)$. A careful inspection of the right-hand sides of (\ref{th:RPM-below}) and (\ref{hp:ep-small}) reveals that in this point it is important that the length of $(a_\ep,b_\ep)$ is bounded from below by a positive constant.

\paragraph{\textmd{\textit{Estimate from above}}}

In this paragraph we prove (\ref{th:RPMF<=}). Since $v_{\ep}$ minimizes the rescaled Perona-Malik functional in the interval $(a_\ep,c_\ep)$ with respect to its boundary conditions, it is enough to exhibit a competitor with the same boundary conditions whose energy is bounded from above by the right-hand side of (\ref{th:RPMF<=}). To this end, we consider the function $\overline{v}_{\ep}:[a_{\ep},c_{\ep}]\to\re$ defined by
\begin{equation}
    \overline{v}_{\ep}(y):=
    \begin{cases}
    v_{\ep}(a_{\ep}) & \text{if }y\in[a_{\ep},b_{\ep}],
    \\
    v_{\ep}(c_{\ep})+H_{\ep}(v_{\ep}(y)-v_{\ep}(c_{\ep})) & \text{if }y\in[b_{\ep},c_{\ep}],
    \end{cases}
\nonumber
\end{equation}
where
\begin{equation}
    H_{\ep}:=\frac{v_{\ep}(c_{\ep})-v_{\ep}(a_{\ep})}{v_{\ep}(c_{\ep})-v_{\ep}(b_{\ep})}.
\nonumber
\end{equation}

In words, $\overline{v}_{\ep}(y)$ is obtained by gluing the function that is constantly equal to $v_{\ep}(a_{\ep})$ in $[a_{\ep},b_{\ep}]$, and the homotety of $v_{\ep}(y)$ whose graph passes through the points $(b_{\ep},v_{\ep}(a_{\ep}))$ and $(c_{\ep},v_{\ep}(c_{\ep}))$. 

We observe that $\overline{v}_{\ep}(y)$ coincides with $v_{\ep}(y)$ for $y\in\{a_{\ep},c_{\ep}\}$, but nevertheless $\overline{v}_{\ep}(y)$ is not yet an admissible competitor, both because $\overline{v}_{\ep}'(y)$ does not necessarily coincide with $v_{\ep}'(y)$ for $y\in\{a_{\ep},c_{\ep}\}$, and because the junction in $b_{\ep}$ is just continuous and does not provide necessarily a function in $H^2((a_{\ep},c_{\ep}))$.

In order to overcome these problems, we introduce a modified version of $\overline{v}_{\ep}(y)$. We consider the function $\hatv_{\ep}:[a_{\ep},c_{\ep}]\to\re$ such that
\begin{itemize}
    \item in the interval $[a_{\ep},b_{\ep}]$ it is a minimizer of $\RPMF_{\ep}(\beta,g_{\ep},(a_{\ep},b_{\ep}),v)$ with boundary conditions
    \begin{equation}
        \hatv_{\ep}(a_{\ep})=v_{\ep}(a_{\ep}),
        \qquad
        \hatv_{\ep}'(a_{\ep})=v_{\ep}'(a_{\ep}),
        \qquad
        \hatv_{\ep}(b_{\ep})=v_{\ep}(a_{\ep}),
        \qquad
        \hatv_{\ep}'(b_{\ep})=0,
        \nonumber
    \end{equation}
    
    \item in the interval $[b_{\ep},c_{\ep}]$ it is a minimizer of $\RPMF_{\ep}(\beta,g_{\ep},(b_{\ep},c_{\ep}),v)$ with boundary conditions
    \begin{equation}
        \hatv_{\ep}(b_{\ep})=v_{\ep}(a_{\ep}),
        \qquad
        \hatv_{\ep}'(b_{\ep})=0,
        \qquad
        \hatv_{\ep}(c_{\ep})=v_{\ep}(c_{\ep}),
        \qquad
        \hatv_{\ep}'(c_{\ep})=v_{\ep}'(c_{\ep}).
        \nonumber
    \end{equation}
\end{itemize}

From this definition it follows that now the junction in $b_{\ep}$ is of class $C^1$, and hence $\hatv_{\ep}\in H^2((a_{\ep},c_{\ep}))$, and in addition $\hatv_{\ep}-v_{\ep}\in H^2_0((a_{\ep},c_{\ep}))$, and therefore
\begin{equation}
    \RPMF_{\ep}(\beta,g_{\ep},(a_{\ep},c_{\ep}),v_{\ep})\leq
    \RPMF_{\ep}(\beta,g_{\ep},(a_{\ep},c_{\ep}),\hatv_{\ep}).
    \label{est:RPMF-vep-hatvep}
\end{equation}

Now the idea is that the energy of $\hatv_{\ep}$ in the two sub-intervals $(a_\ep,b_\ep)$ and $(b_\ep,c_\ep)$ is close to the energy of $\overline{v}_{\ep}$ in the same intervals, which in turn can be estimated from above by the right-hand side of (\ref{th:RPMF<=}). More precisely, when $\ep$ is small enough we claim that in the interval $(a_{\ep},b_{\ep})$ it turns out that
\begin{equation}
    \RPMF_{\ep}(\beta,g_{\ep},(a_{\ep},b_{\ep}),\overline{v}_{\ep})\leq
    \beta\int_{a_{\ep}}^{b_{\ep}}g_{\ep}(y)^2\,dy+\gamma_7 m_{\ep}
    \label{est:w-ab}
\end{equation}
and
\begin{equation}
    \RPMF_{\ep}(\beta,g_{\ep},(a_{\ep},b_{\ep}),\hatv_{\ep})\leq
    \RPMF_{\ep}(\beta,g_{\ep},(a_{\ep},b_{\ep}),\overline{v}_{\ep})+\gamma_8\omep^2,
    \label{est:w-what-ab}
\end{equation}
while in the interval $(b_{\ep},c_{\ep})$ it turns out that
\begin{equation}
    \RPMF_{\ep}(\beta,g_{\ep},(b_{\ep},c_{\ep}),\overline{v}_{\ep})\leq
    \RPMF_{\ep}(\beta,g_{\ep},(b_{\ep},c_{\ep}),v_{\ep})+\gamma_9 m_{\ep}
    \label{est:w-bc}
\end{equation}
and
\begin{equation}
    \RPMF_{\ep}(\beta,g_{\ep},(b_{\ep},c_{\ep}),\hatv_{\ep})\leq
    \RPMF_{\ep}(\beta,g_{\ep},(b_{\ep},c_{\ep}),\overline{v}_{\ep})+\gamma_{10}\omep^2.
    \label{est:w-what-bc}
\end{equation}

Thanks to (\ref{est:RPMF-vep-hatvep}), these four inequalities are enough to establish (\ref{th:RPMF<=}). Let us prove them.

In the interval $[a_{\ep},b_{\ep}]$ it turns out that $|\overline{v}_{\ep}(y)|=|v_{\ep}(a_{\ep})|\leq m_{\ep}$, and therefore from the uniform boundedness of $g_{\ep}(y)$ if follows that
\begin{equation}
    \RPMF_{\ep}(\beta,g_{\ep},(a_{\ep},b_{\ep}),\overline{v}_{\ep})=
    \beta\int_{a_{\ep}}^{b_{\ep}}(\overline{v}_{\ep}(y)-g_{\ep}(y))^2\,dy
\nonumber
\end{equation}
satisfies (\ref{est:w-ab}). In particular the left-hand side is bounded independently of $\ep$.

At this point in the interval $[a_{\ep},b_{\ep}]$ we can apply Lemma~\ref{lemma:Lip-BCs} to the functions $\hatv:=\hatv_{\ep}$ and $v:=\overline{v}_{\ep}$, whose values coincide for $y\in\{a_{\ep},b_{\ep}\}$. Since
\begin{equation}
    |\hatv_{\ep}'(a_{\ep})-\overline{v}_{\ep}'(a_{\ep})|=|v_{\ep}'(a_{\ep})|
    \qquad\text{and}\qquad
    |\hatv_{\ep}'(b_{\ep})-\overline{v}_{\ep}'(b_{\ep})|=0,
\nonumber
\end{equation}
from (\ref{th:Lip-BCs}) and (\ref{est:vep'-abc}) we conclude that
\begin{equation}
    \RPMF_{\ep}(\beta,g_{\ep},(a_{\ep},b_{\ep}),\hatv_{\ep})\leq 
    \RPMF_{\ep}(\beta,g_{\ep},(a_{\ep},b_{\ep}),\overline{v}_{\ep})+\Gamma_0\ep^{4/3} \gamma_1\omep,
\nonumber
\end{equation}
which implies (\ref{est:w-what-ab}) when $\ep$ is small enough (here it is essential that $4/3\geq 1$).

In order to prove (\ref{est:w-bc}) and (\ref{est:w-what-bc}), from (\ref{th:lim-abc}) we deduce that $H_{\ep}\to 1$, and in particular $H_{\ep}$ is bounded, and
\begin{equation}
    |H_{\ep}-1|=
    \frac{|v_{\ep}(b_{\ep})-v_{\ep}(a_{\ep})|}{v_{\ep}(c_{\ep})-v_{\ep}(b_{\ep})}\leq
    \gamma_{11} m_{\ep}.
\label{est:Hep-1}\end{equation}

Now let us estimate $\RPMF_{\ep}(\beta,g_{\ep},(b_{\ep},c_{\ep}),\overline{v}_{\ep})-\RPMF_{\ep}(\beta,g_{\ep},(b_{\ep},c_{\ep}),v_{\ep})$. As for the term with second order derivatives, we obtain that
\begin{equation}
    \int_{b_{\ep}}^{c_{\ep}}\ep^6\left(\overline{v}_{\ep}''(y)^2-v_{\ep}''(y)^2\right)\,dy=
    (H_{\ep}^2-1)\int_{b_{\ep}}^{c_{\ep}}\ep^6 v_{\ep}''(y)^2\,dy\leq
    \gamma_{12}m_\ep,
    \label{est:w-wep-s}
\end{equation}
where the last inequality follows from (\ref{est:Hep-1}) and (\ref{th:RPM-bounded}). As for the fidelity term, from the uniform boundedness of $v_\ep$ and (\ref{est:Hep-1}) we deduce that
\begin{equation}
    |\overline{v}_{\ep}(y)-v_{\ep}(y)|=
    |(H_{\ep}-1)(v_{\ep}(y)-v_{\ep}(c_{\ep}))|\leq
    \gamma_{13} m_{\ep}
    \qquad
    \forall y\in[b_{\ep},c_{\ep}],
\nonumber
\end{equation}
and therefore, since $g_\ep$ is uniformly bounded as well,
\begin{equation}
    \int_{b_{\ep}}^{c_{\ep}}(\overline{v}_{\ep}(y)-g_{\ep}(y))^2\,dy\leq
    \int_{b_{\ep}}^{c_{\ep}}(v_{\ep}(y)-g_{\ep}(y))^2\,dy+\gamma_{14} m_{\ep}.
    \label{est:w-wep-f}
\end{equation}

Finally, we observe that 
\begin{equation}
    \log\left(1+\delta\frac{\sigma^2}{1+\sigma^2}\right)\leq
    |\delta|\frac{\sigma^2}{1+\sigma^2}\leq
    |\delta|\log\left(1+\sigma^2\right)
    \qquad
    \forall\delta>-1, \quad
    \forall\sigma\in\re,
\nonumber
\end{equation}    
where in the second inequality we exploited the monotonicity for $x\geq 0$ of the function $x\mapsto x-(1+x)\log(1+x)$. Applying this inequality with $\delta:=H_\ep^2-1$ and $\sigma=v_\ep'(y)$, we obtain that the terms with first order derivatives satisfy
\begin{eqnarray}
    \lefteqn{\hspace{-5em}
    \frac{1}{\omep^2}\int_{b_{\ep}}^{c_{\ep}}\log\left(1+\overline{v}_{\ep}'(y)^2\right)\,dy-
    \frac{1}{\omep^2}\int_{b_{\ep}}^{c_{\ep}}\log\left(1+v_{\ep}'(y)^2\right)\,dy}
\nonumber
    \\[0.5ex]
    & = &
    \frac{1}{\omep^2}\int_{b_{\ep}}^{c_{\ep}}\log\left(1+(H_{\ep}^2-1)\frac{v_{\ep}'(y)^2}{1+v_{\ep}'(y)^2}\right)\,dy
\nonumber
    \\[0.5ex]
    & \leq &
   |H_{\ep}^2-1|\frac{1}{\omep^2}\int_{b_{\ep}}^{c_{\ep}}\log\left(1+v_{\ep}'(y)^2\right)\,dy
\nonumber
    \\[0.5ex]
    & \leq &
    \gamma_{15} m_{\ep},
    \label{est:w-wep-pm}
\end{eqnarray}
where again the last inequality follows from (\ref{est:Hep-1}) and (\ref{th:RPM-bounded}).

Summing (\ref{est:w-wep-s}), (\ref{est:w-wep-f}) and (\ref{est:w-wep-pm}) we deduce (\ref{est:w-bc}), which implies in particular that the left-hand side is bounded independently of $\ep$.

Therefore, in the interval $[b_{\ep},c_{\ep}]$ we can apply Lemma~\ref{lemma:Lip-BCs} to the functions $\hatv:=\hatv_{\ep}$ and $v:=\overline{v}_{\ep}$, because their values for $y\in\{b_{\ep},c_{\ep}\}$ coincide. Since
\begin{equation}
    |\hatv_{\ep}'(c_{\ep})-\overline{v}_{\ep}'(c_{\ep})|=
    |H_{\ep}-1|\cdot|v_{\ep}'(c_{\ep})|\leq\gamma_{16}m_{\ep}\cdot\omep
\nonumber
\end{equation}
and
\begin{equation}
    |\hatv_{\ep}'(b_{\ep})-\overline{v}_{\ep}'(b_{\ep})|=
    |H_{\ep} v_{\ep}'(b_{\ep})|\leq\gamma_{17}\omep,
\nonumber
\end{equation}
from (\ref{th:Lip-BCs}) we deduce (\ref{est:w-what-bc}) as in the previous case.

%\clearpage

\subsection{Boundedness of higher order norms and compactness}

Let $[a,b]\subseteq(-H,H)$ be any interval. In this final part of the proof we show that the family $\{w_\ep\}$ defined by (\ref{defn:wep-hor}) is relatively compact in $C^1([a,b])$. Moreover, we characterize its possible limit points as in (\ref{th:hor-structure}).

To this end, we consider real numbers
\begin{equation}
    -H<a''<a'<a<b<b'<b''<H.
\nonumber
\end{equation}

From the results of section~\ref{sec:horiz-1} applied in the interval $[a'',b'']$, we know that the family $\{w_\ep\}$ is bounded in $C^0([a'',b''])$. In order to obtain a bound on derivatives, we apply the mean value theorem in the intervals $[a'',a']$ and $[b',b'']$, and we deduce that there exist two families $\{a_\ep\}\subseteq[a'',a']$ and $\{b_\ep\}\subseteq[b',b'']$, and a constant $M_1$ such that
\begin{equation}
    |w_\ep(a_\ep)|+|w_\ep(b_\ep)|+|w_\ep'(a_\ep)|+|w_\ep'(b_\ep)|\leq M_1
    \label{hp:BC-wep}
\end{equation}
for $\ep$ small enough.

Now the idea is the following. We observe that
\begin{multline*}
\RPMF_{\ep}(\beta,g_{\ep},(a_{\ep},b_{\ep}),v_{\ep})
\\
=\RPM_{\ep}((a_{\ep},b_{\ep}),\omep^2 w_{\ep}) 
 +\beta\int_{a_{\ep}}^{b_{\ep}}
\left(\omep^{4}w_{\ep}(y)^{2}-2\omep^{2}g_{\ep}(y)w_{\ep}(y)+g_{\ep}(y)^{2}\right)dy,
\end{multline*}
so that we can consider $w_{\ep}$ as a minimizer to the right-hand side subject to its own boundary conditions. Since the integral of $g_{\ep}(y)^{2}$ plays no role in the minimization process, we can neglect it and divide by $\omep^{2}$. This leads to consider the family of rescaled Perona-Malik functionals for horizontal regions, defined as
\begin{eqnarray*}
\RPMH_{\ep}(\Omega,w)&:=&
\int_{\Omega}\left(\ep ^6\omep^2 w''(y)^2+\frac{1}{\omep^4}\log\left(1+\omep^4 w'(y)^2 \right)\right)dy
\\
& &
\mbox{}+\beta\int_{\Omega}\left(\omep^{2}w(y)^{2}-2g_{\ep}(y)w(y)\right)dy.
%\label{defn:RPMH}
\end{eqnarray*}

Now from statement~(2) of Theorem~\ref{thmbibl:add-on} we know that $g_{\ep}(y)\to f'(x_0)y$ uniformly on bounded sets ($a_0=0$ because in this case the origin is the midpoint of the horizontal part of a step), and hence it is reasonable to expect that $\RPMH_{\ep}$ converges to the quadratic functional
\begin{equation*}
\int_{\Omega}\left[w'(y)^{2}-2\beta f'(x_0)y\cdot w(y)\right]dy,
\end{equation*}
and $w_{\ep}$ converges to some minimizer $w_0$ of the limit problem. Recalling that $w_{\ep}(0)=0$ for every $\ep\in(0,1)$, we conclude that $w_0(0)=0$, and hence the possible limits are only those given by the right-hand side of (\ref{th:hor-structure}).

This is actually what happens, and the proof follows from Proposition~\ref{lemma:ECV2} applied to the family of functionals $\FF_\ep:=\RPMH_\ep$ with
\begin{equation}
    \tau_\ep^2:=\ep^6\omep^2,
    \quad
    \eta_\ep^2:=\beta\omep^2,
    \quad
    \varphi_{\ep}(\sigma):=\frac{1}{\omep^4}\log\left(1+\omep^4 \sigma^2\right),
    \quad
    h_\ep(y)=\beta g_\ep(y).
\nonumber
\end{equation}

All the hypotheses are fulfilled, but for the fact that $\varphi_\ep$ does not satisfy the uniform convexity assumption (\ref{hp:phin-convex}), and is actually not even convex. To address this issue, we introduce the quantity
\begin{equation}
    D_{\ep}:=\frac{1}{6\omep^2},
\nonumber
\end{equation}
and we observe that
\begin{equation}
    \varphi_{\ep}''(\sigma)\geq 1
    \qquad
    \forall\sigma\in[-2D_{\ep},2D_{\ep}].
\nonumber
\end{equation}

Therefore, if we prove that
\begin{equation}
    |w_{\ep}'(y)|\leq 2D_{\ep}
    \qquad
    \forall y\in[a_{\ep},b_{\ep}]
    \label{th:wn-convex}    
\end{equation}
for every $\ep$ small enough, then we can pretend that the function $\varphi_{\ep}$ is convex, and we can finally apply Proposition~\ref{lemma:ECV2}.

In order to prove (\ref{th:wn-convex}), we assume by contradiction that it is false for some $\ep>0$, and as a consequence the open set
\begin{equation}
\mathcal{A}_{\ep}:=\left\{y\in(a_{\ep},b_{\ep}):|w_{\ep}'(y)|>D_{\ep}\right\}
\label{defn:Aep-final}
\end{equation}
is nonempty. In this case we introduce the modified function $\hatw_{\ep}\in H^{2}((a_{\ep},b_{\ep}))$ that satisfies $\hatw_{\ep}(a_{\ep})=w_{\ep}(a_{\ep})$, and whose derivative is given for every $y\in(a_\ep,b_\ep)$ by
\begin{equation*}
\hatw_{\ep}'(y):=\begin{cases}
-D_{\ep} & \text{if }w_{\ep}'(y)<-D_{\ep}, \\
w_{\ep}'(y)\qquad & \text{if }|w_{\ep}'(y)|\leq D_{\ep}, \\
D_{\ep} & \text{if }w_{\ep}'(y)>D_{\ep}.
\end{cases}
\end{equation*}

For the sake of shortness, we write the functional (\ref{defn:Fn}) in the form
\begin{equation}
    \RPMH_{\ep}(\Omega,w)=\FF_{\ep}^s(\Omega,w)+\FF_{\ep}^{pm}(\Omega,w)+\FF_{\ep}^f(\Omega,w),
\nonumber
\end{equation}
where $\FF_{\ep}^s$ is the term with the second order derivative (the singular perturbation term), $\FF_{\ep}^{pm}$ is the term with the logarithm (the Perona-Malik term), and $\FF_{\ep}^{f}$ is the remaining part (the fidelity term). With this notation we observe that
\begin{equation}
    \FF_{\ep}^s((a_{\ep},b_{\ep})\setminus \mathcal{A}_{\ep},w_{\ep})=
    \FF_{\ep}^s((a_{\ep},b_{\ep})\setminus \mathcal{A}_{\ep},\hatw_{\ep})=
    \FF_{\ep}^s((a_{\ep},b_{\ep}),\hatw_{\ep}),
\nonumber
\end{equation}
and
\begin{equation}
    \FF_{\ep}^{pm}((a_{\ep},b_{\ep}),w_{\ep})\geq
    \FF_{\ep}^{pm}((a_{\ep},b_{\ep}),\hatw_{\ep}).
\nonumber
\end{equation}

In addition, assuming always that (\ref{th:wn-convex}) is false, we claim that there exists three real numbers $M_2$, $M_3$, $M_4$, with $M_2>0$, such that the following three inequalities hold true whenever $\ep$ is small enough:
\begin{gather}
    \FF_{\ep}^s(\mathcal{A}_{\ep},w_{\ep})\geq\dfrac{M_2}{|\log\ep|^3},
    \label{claim:Fn-1}
    \\[0.5ex]
    \FF_{\ep}^f((a_{\ep},b_{\ep}),w_{\ep})\geq \FF_{\ep}^f((a_{\ep},b_{\ep}),\hatw_{\ep})-M_3\ep^2|\log\ep|^{5/2},
    \label{claim:Fn-2}
    \\[1ex]
    \RPMH_{\ep}((a_{\ep},b_{\ep}),\hatw_{\ep})\geq \RPMH_{\ep}((a_{\ep},b_{\ep}),w_{\ep})-M_4\ep^2|\log\ep|^{5/2}.
    \label{claim:Fn-3}
\end{gather}

If we prove these claims, then (\ref{th:wn-convex}) follows. Indeed, from all these estimates we obtain the chain of inequalities
\begin{eqnarray*}
\RPMH_{\ep}((a_{\ep},b_{\ep}),w_{\ep}) & = & 
\FF_{\ep}^s(\mathcal{A}_{\ep},w_{\ep})+\FF_{\ep}^s((a_{\ep},b_{\ep})\setminus \mathcal{A}_{\ep},w_{\ep})+\FF_{\ep}^{pm}((a_{\ep},b_{\ep}),w_{\ep})
\\[0.5ex]
& & 
\mbox{}+\FF_{\ep}^f((a_{\ep},b_{\ep}),w_{\ep})
\\[0.5ex]
& \geq &
\dfrac{M_2}{|\log\ep|^3}+\FF_{\ep}^s((a_{\ep},b_{\ep}),\hatw_{\ep})+\FF_{\ep}^{pm}((a_{\ep},b_{\ep}),\hatw_{\ep})
\\[0.5ex]
& & 
\mbox{}+\FF_{\ep}^f((a_{\ep},b_{\ep}),\hatw_{\ep})-M_3\ep^2|\log\ep|^{5/2} 
\\[0.5ex]
& = &
\dfrac{M_2}{|\log\ep|^3}+\RPMH_{\ep}((a_{\ep},b_{\ep}),\hatw_{\ep})-M_3\ep^2|\log\ep|^{5/2} 
\\[0.5ex]
& \geq &
\RPMH_{\ep}((a_{\ep},b_{\ep}),w_{\ep})+\dfrac{M_2}{|\log\ep|^3}-(M_3+M_4)\ep^2|\log\ep|^{5/2} 
\end{eqnarray*}

If we factor out the common term $\RPMH_{\ep}((a_{\ep},b_{\ep}),w_{\ep})$, and we multiply by $|\log\ep|^3$, we obtain an inequality that is impossible if $\ep$ is small enough, which gives the required contradiction. 

\paragraph{\textmd{\textit{Proof of (\ref{claim:Fn-1})}}}

To begin with, we show that there exist two real numbers $ M_5$ and $ M_6$ such that
\begin{equation}
    \RPMH_{\ep}((a_{\ep},b_{\ep}),w_{\ep})\leq M_5,
    \qquad
    \FF_{\ep}^s((a_{\ep},b_{\ep}),w_{\ep})+\FF_{\ep}^{pm}((a_{\ep},b_{\ep}),w_{\ep})\leq M_6.
    \label{th:bound-HRPM}
\end{equation}

In order to prove the first inequality, it is enough to compare $\RPMH_{\ep}((a_{\ep},b_{\ep}),w_{\ep})$ with the value in any other admissible competitor, for example the unique cubic polynomial with the same boundary conditions as $w_{\ep}$. The boundary conditions of $w_{\ep}$ are bounded independently of $\ep$ because of (\ref{hp:BC-wep}), hence also the coefficients of the cubic polynomial are bounded independently of $\ep$, and as a consequence the values of $\RPMH_{\ep}$, computed in this cubic polynomial, are bounded independently of $\ep$. This proves the first inequality. At this point the second inequality follows from the first one because $\FF_{\ep}^f((a_{\ep},b_{\ep}),w_{\ep})$ is bounded independently of $\ep$ due to the uniform bounds on $w_{\ep}$ and $g_{\ep}$.

Now we apply Lemma~\ref{lemma:stima-ABGabc} with
\begin{equation}
    \lambda=\ep^6\omep^2,
    \qquad
    \mu:=\frac{1}{\omep^4},
    \qquad
    \nu:=\omep^4,
    \qquad
    D=M:=D_{\ep}
    \label{choices:lemma}
\end{equation}
and from (\ref{th:est-AM}) we obtain that there exists a constant $ M_7$ such that
\begin{equation}
    |\mathcal{A}_{\ep}|\leq
    \FF_\ep^{pm}((a_\ep,b_\ep),w_\ep)\cdot\frac{\omep^4}{\log(1+\omep^4 D_\ep^2)}
    \leq M_7\omep^4
\nonumber
\end{equation}
when $\ep$ is small enough.

Since we are assuming that (\ref{th:wn-convex}) is false, we know that there exists $c_{\ep}\in(a_{\ep},b_{\ep})$ such that $|w_{\ep}'(c_{\ep})|=2D_{\ep}$. Let us consider the smallest real number $d_{\ep}\in(c_{\ep},b_{\ep})$ such that $|w_{\ep}'(d_{\ep})|=D_{\ep}$, which exists because $|w_{\ep}'(b_{\ep})|<D_{\ep}$ when $\ep$ is small enough. Now we observe that $(c_{\ep},d_{\ep})\subseteq \mathcal{A}_{\ep}$, and as a consequence
\begin{equation}
    d_{\ep}-c_{\ep}\leq|\mathcal{A}_{\ep}|\leq M_7\omep^4. 
\nonumber
\end{equation}

It follows that
\begin{eqnarray*}
    \FF_{\ep}^s(\mathcal{A}_{\ep},w_{\ep}) & \geq & 
%    \FF_{\ep}^s((c_{\ep},d_{\ep}),w_{\ep})
%    \\
%    & = &
    \ep^{6}\omep^2\int_{c_{\ep}}^{d_{\ep}}w_{\ep}''(y)^2\,dy
    \\
    & \geq &
    \ep^{6}\omep^2 \frac{1}{d_{\ep}-c_{\ep}}\left(\int_{c_{\ep}}^{d_{\ep}}w_{\ep}''(y)\,dy\right)^{2}
    \\
    & = &
    \ep^{6}\omep^2 \frac{1}{d_{\ep}-c_{\ep}}\left(w_{\ep}'(d_{\ep})-w_{\ep}'(c_{\ep})\right)^{2}
    \\
    & \geq &
    \ep^{6}\omep^2 \frac{1}{ M_7\omep^4}D_{\ep}^2,    
\end{eqnarray*}
which proves (\ref{claim:Fn-1}).

\paragraph{\textmd{\textit{Proof of (\ref{claim:Fn-2})}}}

We claim that there exists a constant $ M_8$ such that
\begin{equation}
    \int_{a_{\ep}}^{b_{\ep}}|w_{\ep}'(y)-\hatw_{\ep}'(y)|\,dy\leq
     M_8 \ep^2|\log\ep|^{5/2}
    \label{th:wn'-hatwn'}
\end{equation}
when $\ep$ is small enough. Since the two functions coincide in $a_\ep$, from this inequality it follows that
\begin{equation}
    |w_{\ep}(y)-\hatw_{\ep}(y)|\leq
     M_8\ep^2|\log\ep|^{5/2}
    \qquad
    \forall y\in[a_\ep,b_\ep],
    \label{est:wn-hatwn}
\end{equation}
which in turn implies (\ref{claim:Fn-2}) because of the uniform bound on $b_{\ep}-a_{\ep}$, $w_\ep(y)$ and $g_{\ep}(y)$.

In order to prove (\ref{th:wn'-hatwn'}) we observe that
\begin{equation}
    \int_{a_{\ep}}^{b_{\ep}}|w_{\ep}'(y)-\hatw_{\ep}'(y)|\,dy=
    \int_{\mathcal{A}_{\ep}}|w_{\ep}'(y)-\hatw_{\ep}'(y)|\,dy=
    \int_{\mathcal{A}_{\ep}}|w_{\ep}'(y)|\,dy-|\mathcal{A}_\ep|\cdot D_\ep.
\nonumber
\end{equation}

Applying again Lemma~\ref{lemma:stima-ABGabc} with the choices (\ref{choices:lemma}), from (\ref{th:high-deriv}) we obtain that the latter is less than or equal to
\begin{equation}
    \left[\FF_\ep^s((a_\ep,b_\ep),w_\ep)+\FF_\ep^{pm}((a_\ep,b_\ep),w_\ep)\strut\right]^2\cdot\frac{3}{32}\left[\ep^6\omep^2\cdot\frac{1}{\omep^{12}}\cdot\log\left(1+\omep^4 D_\ep^2\right)\right]^{-1/2},
\nonumber
\end{equation}
which implies (\ref{th:wn'-hatwn'}) because of the second inequality in (\ref{th:bound-HRPM}).

\paragraph{\textmd{\textit{Proof of (\ref{claim:Fn-3})}}}

To begin with, we prove that there exists a constant $ M_9$ such that
\begin{equation}    
\int_{a_{\ep}}^{b_{\ep}}|\hatw_{\ep}'(y)|\,dy\leq
\int_{a_{\ep}}^{b_{\ep}}|w_{\ep}'(y)|\,dy\leq
 M_9
\label{est:hatwn'}
\end{equation}
when $\ep$ is small enough.

To this end, in addition to (\ref{defn:Aep-final}) we introduce the sets
\begin{equation}
    \mathcal{C}_\ep:=\left\{y\in(a_{\ep},b_{\ep}):|w_{\ep}'(y)|<D_{\ep}\right\},
    \qquad
    \mathcal{B}_\ep:=\left\{y\in(a_{\ep},b_{\ep}):|w_{\ep}'(y)|=D_{\ep}\right\},
\nonumber
\end{equation}
and we write the total variation of $w_\ep$ as
\begin{equation}
    \int_{\mathcal{C}_\ep}|w_{\ep}'(y)|\,dy+
    \int_{\mathcal{B}_\ep}|w_{\ep}'(y)|\,dy+
    \left(\int_{\mathcal{A}_\ep}|w_{\ep}'(y)|\,dy-|\mathcal{A}_\ep|\cdot D_\ep\right)+
    |\mathcal{A}_\ep|\cdot D_\ep
\nonumber
\end{equation}

At this point we apply once again Lemma~\ref{lemma:stima-ABGabc} with the choices (\ref{choices:lemma}), and thanks to the second inequality in (\ref{th:bound-HRPM}) we obtain that the four terms are bounded as required.

Now we choose a function $\psi\in C^\infty(\re)$ such that $\psi(y)=0$ for every $y\leq a$ and $\psi(y)=1$ for every $y\geq b$. For every  $\ep\in(0,1)$ we consider the constant
\begin{equation}
    \Delta_{\ep}:=w_{\ep}(b_{\ep})-\hatw_{\ep}(b_{\ep}),
\nonumber
\end{equation}
and the function
\begin{equation}
    w_{*,\ep}(y):=\hatw_{\ep}(y)+\Delta_{\ep}\cdot\psi(y)
    \qquad
    \forall y\in[a_{\ep},b_{\ep}].
    \label{defn:w*n}
\end{equation}

We observe that $w_\ep-w_{*,\ep}\in H^2_0((a_\ep,b_\ep))$, and from (\ref{est:wn-hatwn}) we deduce that
\begin{equation}
    |\Delta_{\ep}|\leq
     M_8\ep^2|\log\ep|^{5/2}.
    \label{est:Cn}
\end{equation}

Since $w_{\ep}$ is a minimum point of $\RPMH_{\ep}((a_{\ep},b_{\ep}),w)$ with respect to its boundary conditions, this implies that
\begin{equation}
    \RPMH_{\ep}((a_{\ep},b_{\ep}),w_{\ep})\leq \RPMH_{\ep}((a_{\ep},b_{\ep}),w_{*,\ep}).
\nonumber
\end{equation}

Therefore, in order to prove (\ref{claim:Fn-3}) it is enough to show that
\begin{equation}
    \RPMH_{\ep}((a_{\ep},b_{\ep}),w_{*,\ep})\leq
    \RPMH_{\ep}((a_{\ep},b_{\ep}),\hatw_{\ep})+M_4\ep^2|\log\ep|^{5/2}.
\nonumber
\end{equation}

We show more precisely that an inequality of this type is true separately for each of the three functionals that sum up to $\RPMH_{\ep}$.

In the case of $\FF_{\ep}^s$ and $\FF_{\ep}^f$ the estimate follows almost immediately from (\ref{defn:w*n}) and (\ref{est:Cn}). In the case of $\FF_{\ep}^{pm}$ we start from the inequality
\begin{equation*}
    \quad
    \frac{1}{\nu}\log\left(1+\nu(\sigma+\tau)^2\right)-\frac{1}{\nu}\log\left(1+\nu\sigma^2\right)= 
    \frac{1}{\nu}\log\left(1+\nu\frac{2\sigma\tau+\tau^2}{1+\nu\sigma^2}\right)
    \leq
    2|\sigma|\cdot|\tau|+\tau^2,
    \quad
\end{equation*}
which is true for every triple of real numbers $(\nu,\sigma,\tau)$ with $\nu>0$. Applying it with 
\begin{equation}
    \nu:=\omep^4,
    \qquad\quad
    \sigma=\hatw_{\ep}'(y),
    \qquad\quad
    \tau:=\Delta_{\ep}\cdot\psi'(y),
\nonumber
\end{equation}
we deduce that
\begin{multline*}
    \qquad
    \FF_{\ep}^{pm}((a_{\ep},b_{\ep}),w_{*,\ep})\leq
    \FF_{\ep}^{pm}((a_{\ep},b_{\ep}),\hatw_{\ep})
    \\
    \mbox{}+2|\Delta_{\ep}|\int_{a_{\ep}}^{b_{\ep}}|\hatw_{\ep}'(y)|\cdot|\psi'(y)|\,dy+
    \Delta_{\ep}^2\int_{a_{\ep}}^{b_{\ep}}\psi'(y)^2\,dy,
    \qquad
\end{multline*}
so that the conclusion follows from (\ref{est:hatwn'}) and (\ref{est:Cn}).
\qed

%\clearpage

\subsubsection*{\centering Acknowledgments}

The authors are members of the \selectlanguage{italian} ``Gruppo Nazionale per l'Analisi Matematica, la Probabilità e le loro Applicazioni'' (GNAMPA) of the ``Istituto Nazionale di Alta Matematica'' (INdAM).

The authors acknowledge the MIUR Excellence Department Project awarded to the Department of Mathematics, University of Pisa, CUP I57G22000700001.

\selectlanguage{english}

% \bibliographystyle{MaxNew}
% \bibliography{Bibliography}

\label{NumeroPagine}

\end{document}